\title{ Lipschitz structure and minimal metrics on topological groups}
\author {Christian Rosendal}
\address{Department of Mathematics, Statistics, and Computer Science (M/C 249)\\University of Illinois at Chicago\\851 S. Morgan St.\\Chicago, IL 60607-7045\\USA}
\email{rosendal.math@gmail.com}
\urladdr{http://homepages.math.uic.edu/$~$rosendal}
\date {}
\newcommand{\norm}[1]{\lVert#1\rVert}
\newcommand {\N}{\mathbb N}
\newcommand {\R}{\mathbb R}
\newcommand {\Z}{\mathbb Z}
\newcommand {\T}{\mathbb T}
\newcommand{\eps}{\epsilon}
\newcommand{\saa}{\Rightarrow}
\newcommand{\til}{\rightarrow}
\newcommand{\Lim}[1]{\mathop{\longrightarrow}\limits_{#1}}
\newcommand {\del}{ \; \big| \;}
\newcommand {\ku} {\mathcal}
\newcommand{\ov}{\overline}
\newcommand{\inv}{^{-1}}
\newcommand{\Id}{{\rm Id}}
\newcommand {\e} {\exists}
\renewcommand {\a} {\forall}
\newtheorem{thm}{Theorem}
\newtheorem{cor}[thm]{Corollary}
\newtheorem{lemme}[thm]{Lemma}
\newtheorem{prop} [thm] {Proposition}
\newtheorem{defi} [thm] {Definition}
\newtheorem{prob}[thm]{Problem}
\theoremstyle{definition}
\newtheorem{exa}[thm]{Example}
\begin{document}
\subjclass[2000]{Primary: 22A10, Secondary: 03E15}

\keywords{Metrisable groups, left-invariant metrics, Hilbert's fifth problem, Lipschitz structure}
\thanks{The author was partially supported by a Simons Foundation Fellowship (Grant \#229959) and also recognises support from the NSF (DMS 1201295 \& DMS  1464974)}

\begin{abstract}
We discuss the problem of deciding when a metrisable topological group $G$ has a canonically defined local Lipschitz geometry. This naturally leads to the concept of {\em minimal} metrics on $G$, that we characterise intrinsically in terms of a linear growth condition on powers of group elements. 

Combining this with work on the large scale geometry of topological groups, we also identify the class of metrisable groups admitting a canonical global Lip\-schitz geometry.

In turn, minimal metrics connect with Hilbert's fifth problem for completely metrisable groups and we show, assuming that the set of squares is sufficiently rich, that every element of some identity neighbourhood belongs to a $1$-parameter subgroup. 
\end{abstract}

\maketitle


The present note deals with the problem of deciding which metrisable topological groups have a well-defined local geometry intrinsic to the topological group structure. To make this problem more precise, let us recall that a {\em metrisable topological group} is a topological group $G$ whose topology may be induced by some metric, which then is said to be {\em compatible} with the topology on $G$. Thus, the metric itself is not part of the given data. These groups where characterised in fundamental papers by G. Birkhoff \cite{birkhoff} and S. Kakutani \cite{kakutani}, namely, a Hausdorff topological group $G$ is metrisable if and only if it is first countable. Moreover, such a group necessarily admits a compatible {\em left-invariant} metric $d$, i.e., so that $d(hg,hf)=d(g,f)$ for all $g,f,h\in G$.

An easy calculation shows that, if $d$ and $\partial$ are compatible left-invariant metrics on a topological group $G$, then the identity map ${\rm id}\colon (G,\partial)\til (G,d)$ is always uniformly continuous and hence, by symmetry, a uniform homeomorphism. This is of course also a reflection of the fact that both $d$ and $\partial$ will metrise the left-uniform structure on $G$. However, unless further assumptions are added,  there is in general no control on the modulus of uniform continuity of the mapping. The problem is thus to decide which, if any, of the compatible left-invariant metrics on $G$ determine a canonical local geometric structure on the group. At least up to local bi-Lipschitz equivalence, this is solved if $G$ admits a {\em minimal} metric in the following sense.

\begin{defi}\label{defi min}
A  metric $d$ on a topological group $G$ is said to be 
{\em minimal} if it compatible, left-invariant and, for every other compatible left-invariant metric $\partial$ on $G$, the map
$$
{\rm id}\colon (G,\partial)\til (G,d)
$$
is Lipschitz in a neighbourhood of the identity, i.e., if there is an identity neighbourhood $U$ and a constant $K$ so that
$$
d(g,f)\leqslant K\cdot \partial(g,f)
$$
for all $g,f\in U$.
\end{defi}
Let us first observe that, if $U$ and $K$ are as above, then ${\rm id}\colon (G,\partial)\til (G,d)$ is locally $K$-Lipschitz. For given $h\in G$ and $v,w\in U$, note that
$$
d(hv,hw)=d(v,w)\leqslant K\cdot \partial(v,w)=\partial(hv,hw),
$$
so the identity map is $K$-Lipschitz on the neighbourhood $hU$ of $h$.
It follows immediately  that any two minimal metrics on $G$ are locally bi-Lipschitz and  thus identify a canonical local geometric or, more specifically,  Lipschitz structure on $G$.

The concept of an intrisic Lipschitz structure on a topological object is of course common to other areas. For example, a well-known result due to D. Sullivan \cite{sullivan} states that, except for $n=4$, any topological $n$-manifold $M$ admits a Lipschitz structure, that is, an atlas $\{\phi_i\colon U_i\to \R^n\}$ whose transition maps are locally Lipschitz. Moreover, any two such Lipschitz structures are related by a locally bi-Lipschitz homeomorphism of $M$.  The local Lipschitz structure identified by a minimal metric is even more rigid, since any two minimal metrics are locally bi-Lipschitz by the identity map.

We remark that, unless we accept to force the metric $d$ to be bounded, the local minimality of Definition \ref{defi min} really describes the strongest notion of minimality possible. Indeed, if $d$ is unbounded, then $\sqrt d$ is a compatible left-invariant metric, while  ${\rm id}\colon (G,\sqrt d)\til (G,d)$ is not Lipschitz for large distances.

Note also that, at least for short distances, there is no maximal metric unless $G$ is discrete. That is, if $G$ is non-discrete and $d$ is any compatible left-invariant metric, then the mapping ${\rm id}\colon (G,d)\til (G,\sqrt d)$ will not be Lipschitz for short distances.

Another helpful observation is that, if $d$ is minimal and $U\ni1 $ is a fixed $d$-bounded open neighbourhood, then, for every compatible left-invariant $\partial$, the map
$$
{\rm id}\colon (U,\partial)\til (U,d)
$$
is Lipschitz, i.e., the localising set $U$ is independent of $\partial$. It follows immediately that any two bounded minimal metrics are bi-Lipschitz equivalent. Nevertheless, in many cases, a better global Lipschitz structure can be identified that captures the large scale geometry of the group. We return to this in Theorem \ref{min-max}.

One way to think about minimal metrics is via the growth or rather decay of the balls $ B_d(\alpha)=\{g\in G\del d(g,1)\leqslant \alpha\}$ as $\alpha\til0$. Namely, minimality of $d$ simply expresses that, if  $\partial$ is another compatible left-invariant metric on $G$, then, for some $K=K(\partial)\geqslant 1$, we have
$$
B_\partial(\alpha)\subseteq B_d({K\alpha})
$$
whenever $\alpha\leqslant 1$. So, up to the rescaling by $K$, the $d$-balls $B_d(\beta)$ are as large as possible as $\beta$ decreases to $0$.

Whereas minimality of a metric is a relative notion, i.e., defined in terms of comparisons with other compatible left-invariant metrics on the group, the main result of our note furnishes an internal characterisation of minimality without reference to other metrics. Namely, we characterise the minimal metrics as those satisfying a certain linear growth condition on powers in a neighbourhood of $1$. This condition in turn has already been studied in the literature in the context of locally compact groups, where it turned out to be central to the solution to Hilbert's fifth problem. We shall discuss this connection after our result.

\begin{thm}\label{main}
The following conditions are equivalent for a compatible left-invariant metric $d$ on a topological group $G$.
\begin{enumerate}
\item $d$ is minimal,
\item\label{strongest cond} there is an open set $U\ni1 $ so that
$$
g, g^2,g^3, \ldots, g^n\in  U\;\;\saa\;\; d(g,1)\leqslant \frac 1{n},
$$
\item\label{gleason cond} there are constants $\eps>0$ and $K\geqslant 1$ so that
$$  
d(g,1)\leqslant \frac\eps n\;\;\saa\;\;  n\cdot d(g,1) \leqslant   K\cdot d(g^n,1),
$$
\item there are an open set $U\ni 1$ and a constant $K\geqslant 1$ so that
$$
g,g^2,g^4, g^8,\ldots, g^{2^n}\in U\;\;\saa\;\;  2^n\cdot d(g,1) \leqslant   K\cdot d(g^{2^n},1).
$$
\end{enumerate}
\end{thm}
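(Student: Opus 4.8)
The plan is to prove the cycle of implications $(1)\Rightarrow(2)\Rightarrow(3)\Rightarrow(4)\Rightarrow(1)$, where the only genuinely substantial step will be $(4)\Rightarrow(1)$, since $(2)\Rightarrow(3)$ and $(3)\Rightarrow(4)$ are essentially formal reductions (passing to a subsequence of powers and unwinding quantifiers), and $(1)\Rightarrow(2)$ will be handled by exhibiting a witnessing metric.

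For $(1)\Rightarrow(2)$ I would construct, from an arbitrary open set $V\ni 1$, a compatible left-invariant metric $\partial$ for which $\partial(g,1)\geqslant \tfrac1n$ whenever $g,g^2,\ldots,g^n$ all lie in some smaller neighbourhood. The natural device is the Birkhoff--Kakutani construction: choose a decreasing neighbourhood basis $(V_k)$ with $V_{k+1}V_{k+1}V_{k+1}\subseteq V_k$ and $V_0\subseteq V$, define a norm-like functional and hence a metric $\partial$; the point is that if $g,\ldots,g^n\in V_k$ then $g^j\in V_{k-1}$ for $j$ up to roughly $2^{k-1}$, forcing $\partial(g,1)$ to be at least of order $2^{-k}$, which is comparable to $1/n$ along the relevant scale. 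Minimality of $d$ then gives $d(g,1)\leqslant K\partial(g,1)$, and after rescaling (replacing $\partial$ by $K\partial$, which only changes constants) one extracts the clean statement $d(g,1)\leqslant 1/n$ on a possibly smaller $U$. The bookkeeping relating "$g,\ldots,g^n$ in $V_k$" to the value of $\partial$ is the fiddly part here, but it is standard.

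The implication $(4)\Rightarrow(1)$ is the crux. Given a compatible left-invariant metric $\partial$, I want $U_0\ni 1$ and a constant $C$ with $d(g,f)\leqslant C\,\partial(g,f)$ for $g,f\in U_0$; by left-invariance it suffices to bound $d(g,1)$ by $C\,\partial(g,1)$ for $g$ near $1$. Fix $g$ small in the $\partial$-metric and let $n=2^m$ be the largest power of two with $g,g^2,\ldots,g^{2^m}$ all staying inside the set $U$ from hypothesis (4) --- here one needs that such a finite $m$ exists for $g\neq 1$, i.e. that not all powers of $g$ remain in $U$, which one arranges by first shrinking $U$ (using that $d$ is a metric, so $d(g^k,1)$ cannot stay bounded by a fixed small quantity for all $k$ unless $g=1$, combined with left-invariance to keep $g^k$ trapped). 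Then hypothesis (4) yields $2^m\,d(g,1)\leqslant K\,d(g^{2^m},1)$, and since $g^{2^m}$ lives in a fixed bounded region, $d(g^{2^m},1)\leqslant D$ for a constant $D$ depending only on $U$, whence $d(g,1)\leqslant KD/2^m$. On the other hand, maximality of $m$ means $g^{2^{m+1}}\notin U$ (or already $g^{2^m}$ is "large"), so one of $g,g^2,\ldots,g^{2^{m+1}}$ is bounded away from $1$; since each of $g,g^2,\ldots$ is a product of at most $2^{m+1}$ copies of $g$, the triangle inequality in $\partial$ forces $2^{m+1}\partial(g,1)\geqslant \partial(g^j,1)\geqslant \delta$ for some fixed $\delta=\delta(U,\partial)>0$, i.e. $2^m\geqslant \delta/(2\partial(g,1))$, equivalently $1/2^m\leqslant 2\partial(g,1)/\delta$. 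Combining, $d(g,1)\leqslant (2KD/\delta)\,\partial(g,1)$, which is exactly local Lipschitzness with $C=2KD/\delta$.

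The main obstacle, as indicated, is making the "largest $m$" argument in $(4)\Rightarrow(1)$ completely rigorous: one must ensure simultaneously that (a) for each $g\neq 1$ sufficiently $\partial$-close to $1$ the escape time $m=m(g)$ is finite, (b) at the escape time some power of $g$ of index $\leqslant 2^{m+1}$ is uniformly bounded away from $1$ in the $\partial$-metric by a constant independent of $g$, and (c) the powers visited up to the escape time remain in a fixed $d$-bounded set so that $d(g^{2^m},1)$ is controlled. All three are arranged by choosing the initial neighbourhoods carefully --- e.g. taking $U$ with $\ov{U}$ compact-free issues avoided by working with $d$-balls $B_d(r)$ directly and using $B_d(r)\cdot B_d(r)\subseteq B_d(2r)$ --- but the interplay of the two metrics $d$ and $\partial$ in the definition of the escape time requires care, and this is where I would spend most of the write-up. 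The passages $(2)\Rightarrow(3)$ and $(3)\Rightarrow(4)$ I expect to dispatch in a few lines each: $(2)\Rightarrow(3)$ by noting that $d(g,1)\leqslant \eps/n$ with $\eps$ small enough forces $g,\ldots,g^n$ into $U$ (again via $B_d(\eps/n)^n\subseteq B_d(\eps)\subseteq U$), so that $d(g,1)\leqslant 1/n\leqslant d(g^n,1)/(n\,d(g,1))\cdot$ something --- more precisely one shows $n\,d(g,1)\leqslant 1\leqslant K\,d(g^n,1)$ fails to be immediate and instead argues $n\,d(g,1)\leqslant 1$ while a lower bound on $d(g^n,1)$ comes from $g^n\notin U'$ for a suitable sub-neighbourhood; and $(3)\Rightarrow(4)$ by restricting the $n$ in (3) to powers of two and choosing $U=B_d(\eps)$.
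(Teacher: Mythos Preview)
Your sketch of $(4)\Rightarrow(1)$ is correct and essentially matches the paper's argument (the paper phrases the escape in terms of the $\partial$-scale rather than an escape time, but the two are equivalent). However, you have misidentified where the difficulty lies: $(4)\Rightarrow(1)$ is the \emph{shortest} implication in the paper, while $(1)\Rightarrow(2)$, $(2)\Rightarrow(3)$ and $(3)\Rightarrow(4)$ each require a genuine idea and a page of work. Your proposal has real gaps in all three.

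For $(1)\Rightarrow(2)$, your plan is to build a single Birkhoff--Kakutani metric $\partial$ from a generic chain $V_{k+1}^3\subseteq V_k$ and then argue that ``$g,\ldots,g^n\in V_k$ forces $\partial(g,1)$ to be of order $2^{-k}$, comparable to $1/n$''. This does not work: membership of $g$ in $V_k$ already gives $\partial(g,1)\lesssim 2^{-k}$, and the additional information about $g^2,\ldots,g^n$ contributes nothing, since $n$ and $k$ are unrelated. In fact what you are trying to construct is precisely a metric satisfying condition~(2), and a generic Birkhoff--Kakutani metric does \emph{not} satisfy it (otherwise every compatible left-invariant metric would be minimal). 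The paper's proof proceeds by contradiction: assuming (2) fails for every $U$, one harvests, for each scale, an element $g$ with $g,\ldots,g^{2^k}$ small but $d(g,1)>2^{-k}$, and builds the chain $V_{2^{-m}}$ out of these bad elements via $V_{2^{-m-k}}=WFW$ with $F=\{1,g,g^{-1}\}$; the resulting Kakutani metric $\partial$ then has $\partial(g,1)$ small while $d(g,1)$ is large, contradicting minimality. The failures of (2) are the input to the construction, not a by-product.

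For $(3)\Rightarrow(4)$, ``restrict $n$ to powers of two and take $U=B_d(\eps)$'' does not suffice: the hypothesis of (4) is that the \emph{dyadic} powers $g,g^2,g^4,\ldots,g^{2^n}$ lie in $U$, which does not directly yield $d(g,1)\leqslant \eps/2^n$, the hypothesis you need to invoke (3). The paper first upgrades (3) to the statement ``$g,g^2,\ldots,g^n\in U\Rightarrow n\,d(g,1)\leqslant K\,d(g^n,1)$'' (a short but nontrivial escape-time argument) and then runs an induction showing that control of the dyadic powers $g^{2^i}$ in a small ball $W$ propagates to control of \emph{all} powers $g^i$ in a larger ball $V$, using binary expansion of the exponent. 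Similarly, $(2)\Rightarrow(3)$ is not the one-liner you suggest: knowing $g,\ldots,g^n\in U$ gives an \emph{upper} bound $d(g,1)\leqslant 1/n$, but the required \emph{lower} bound on $d(g^n,1)$ comes only after locating the first escape time $m\geqslant 2pn$ and comparing $g^{m+1}$ with $g^{(k+1)n}$ for $kn\leqslant m<(k+1)n$.
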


The above result may be said to provide a satisfying description of minimal metrics on the group, indeed, the criterion only involves computations with powers of single elements. On the other hand, we have no informative reformulation of which metrisable groups admit minimal metrics. One would like to know if there is such a description that does not directly involve asking for an object as complicated as a minimal metric itself. More precisely, the following problem remains open.
\begin{prob}
Let $G$ be a universal Polish group, e.g., $G={\rm Homeo}([0,1]^\N)$. Is the collection
$$
\{H\leqslant G\del H \text{ is a closed subgroup admitting a minimal metric}\}
$$
Borel in the  standard Borel space of closed subgroups of $G$?
\end{prob}

Before commencing the proof of Theorem \ref{main}, we recall some procedures for constructing compatible left-invariant metrics on a topological group. The main result in this area is the above mentioned theorem independently due to Birkhoff and Kakutani. Of the two proofs, Birkhoff's is the simplest and relies on a memorable little  trick.
\begin{lemme}[G. Birkhoff \cite{birkhoff}]\label{birkhoff} 
Let $G$ be a topological group and $\{V_{3^n}\}_{n\in\Z}$
a neighbourhood basis at the identity consisting of symmetric open sets so that $G=\bigcup_{n\in \Z}V_{3^n}$ and $\big(V_{3^n}\big)^3\subseteq V_{3^{n+1}}$.
Define $\delta(g,f)=\inf\big(2^n\del f\inv g\in V_{3^n}\big)$ and put
$$
d(g,f)=\inf \Big(\sum_{i=0}^{k-1}\delta(h_i,h_{i+1})\del h_0=g,
h_k=f\Big).
$$
Then
$\delta(g,f)\leqslant 2\cdot d(g,f)\leqslant 2\cdot\delta(g,f)$
and $d$ is a compatible left-invariant metric on $G$.
\end{lemme}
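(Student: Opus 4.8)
The plan is to prove Birkhoff's lemma in two stages, first establishing the quasi-metric inequality $\delta(g,f)\leqslant 2\cdot d(g,f)\leqslant 2\cdot\delta(g,f)$ and then deducing from it that $d$ is a compatible left-invariant metric. Observe first that $d$ is symmetric and satisfies the triangle inequality essentially by construction, since it is defined as an infimum of chain-sums of the symmetric function $\delta$; left-invariance of $d$ is immediate from left-invariance of $\delta$, which in turn follows because $\delta(g,f)$ depends only on $f^{-1}g$. The upper bound $d(g,f)\leqslant\delta(g,f)$ is trivial, taking the one-step chain $h_0=g$, $h_1=f$. So the only substantial point is the lower bound $\delta(g,f)\leqslant 2\cdot d(g,f)$, equivalently $\delta(h_0,h_k)\leqslant 2\sum_{i=0}^{k-1}\delta(h_i,h_{i+1})$ for every chain.

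The key step, and the heart of the argument, is this subadditivity-up-to-a-factor-of-$2$ estimate for $\delta$, which is proved by induction on the length $k$ of the chain. Write $S=\sum_{i=0}^{k-1}\delta(h_i,h_{i+1})$. The case $k=1$ is trivial. For the inductive step, choose the largest $j$ with $\sum_{i=0}^{j-1}\delta(h_i,h_{i+1})\leqslant S/2$; then both the initial segment $h_0,\ldots,h_j$ and the terminal segment $h_{j+1},\ldots,h_k$ have chain-sum at most $S/2$, and the middle edge satisfies $\delta(h_j,h_{j+1})\leqslant S$. By the induction hypothesis, $\delta(h_0,h_j)\leqslant S$ and $\delta(h_{j+1},h_k)\leqslant S$. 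Now pick $n$ minimal with $2^n\geqslant S$; then each of $\delta(h_0,h_j)$, $\delta(h_j,h_{j+1})$, $\delta(h_{j+1},h_k)$ is at most $2^n$, meaning $h_j^{-1}h_0$, $h_{j+1}^{-1}h_j$, $h_k^{-1}h_{j+1}$ all lie in $V_{3^n}$, hence $h_k^{-1}h_0\in (V_{3^n})^3\subseteq V_{3^{n+1}}$, so $\delta(h_0,h_k)\leqslant 2^{n+1}\leqslant 2S$, completing the induction. (One must be slightly careful extracting a clean bound from the definition of $\delta$ as an infimum of $2^n$, but since the $V_{3^n}$ are nested this causes no trouble.)

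Given the two-sided comparison with $\delta$, it remains to check that $d$ is compatible and that it is a genuine metric rather than a pseudometric. Compatibility follows because the balls $\{g : d(g,1)<2^n\}$ are, up to the factor $2$, sandwiched between $V_{3^{n-1}}$ and $V_{3^{n+1}}$ by the comparison estimate, and $\{V_{3^n}\}$ is a neighbourhood basis at $1$; since $G=\bigcup_n V_{3^n}$, $\delta$ and hence $d$ is finite-valued everywhere. Finally, $d(g,f)=0$ forces $\delta(g,f)=0$, i.e.\ $f^{-1}g\in V_{3^n}$ for all $n$, so $f^{-1}g=1$ by the Hausdorff (or at least $T_1$) separation built into ``$\{V_{3^n}\}$ is a neighbourhood basis'' together with $\bigcap_n V_{3^n}=\{1\}$; thus $d$ separates points. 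The only genuine obstacle is the inductive estimate of the previous paragraph — everything else is a routine unwinding of definitions — and the one subtlety there is organising the induction so that the factor stays exactly $2$ rather than degrading at each step, which the ``split the chain in half by weight'' choice of the index $j$ handles cleanly.
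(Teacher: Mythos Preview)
The paper does not actually prove this lemma; it is quoted as Birkhoff's classical result, with the remark that the proof ``relies on a memorable little trick.'' Your argument \emph{is} that trick --- the weighted bisection of the chain --- and the overall structure is correct.

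There is, however, one genuine slip in the inductive step. You write: ``pick $n$ minimal with $2^n\geqslant S$ \ldots\ so $\delta(h_0,h_k)\leqslant 2^{n+1}\leqslant 2S$.'' The last inequality is false in general: with your choice of $n$ one only has $2^{n-1}<S\leqslant 2^n$, hence $2^{n+1}<4S$, not $2^{n+1}\leqslant 2S$ (take for instance $S=3$, $n=2$). The repair is to use the fact your parenthetical gestures toward but does not quite state: the values of $\delta$ lie in $\{0\}\cup\{2^m:m\in\Z\}$. Since each of $\delta(h_0,h_j)$, $\delta(h_j,h_{j+1})$, $\delta(h_{j+1},h_k)$ is a power of $2$ bounded by $S$, each is in fact $\leqslant 2^n$ where $n$ is \emph{maximal} with $2^n\leqslant S$. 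With this $n$ the three elements lie in $V_{3^n}$, their product in $V_{3^{n+1}}$, and now $\delta(h_0,h_k)\leqslant 2^{n+1}=2\cdot 2^n\leqslant 2S$ is honest. So the fix is a one-word change (``minimal'' to ``maximal'', with the inequality flipped), but as written the displayed bound does not follow.

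Everything else --- symmetry, triangle inequality, left-invariance, the trivial upper bound, compatibility, and separation of points --- is correctly handled.
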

However, the metric $d$ produced by Birkhoff's construction decreases exponentially faster than needed for our purposes due to a factor $\big(\frac 32)^n$.  For this, we shall instead rely on the construction of Kakutani from which a better estimate can be extracted (see \cite{diestel} for a  proof of the exact statement of Lemma \ref{kakutani} below).

\begin{lemme}[S. Kakutani \cite{kakutani}]\label{kakutani}
Let $G$ be a topological group and $\big\{V_{2^{-n}}\big\}_{n\in \N}$ a neighbourhood basis at the identity consisting of symmetric open sets satisfying $\big(V_{2^{-n}}\big)^2\subseteq V_{2^{-n+1}}$.
Then there is a compatible left-invariant metric $d$ on $G$  so that
$$
B_d({2^{-n}})\subseteq V_{2^{-n}}\subseteq B_d({4\cdot 2^{-n}})
$$
for all $n\in \N$.
\end{lemme}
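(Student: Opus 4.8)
The plan is to apply Birkhoff's chain construction (the ``memorable little trick'' above) to the given family $\{V_{2^{-n}}\}$, and then to extract the sharp two-sided estimate from the \emph{square} condition $\big(V_{2^{-n}}\big)^2\subseteq V_{2^{-n+1}}$ --- it is this condition, rather than a cube condition, that puts the resulting metric on the same dyadic scale as the indexing of the $V_{2^{-n}}$, with the factor $4$ as the constant slack one must allow. I would first normalise so that $V_1=G$, which disturbs no hypothesis, so that the gauge $\delta(g)=\inf\big\{2^{-n}\mid n\in\N,\ g\in V_{2^{-n}}\big\}$ is a well-defined map $\delta\colon G\to[0,1]$. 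Since $1\in V_{2^{-n-1}}\subseteq\big(V_{2^{-n-1}}\big)^2\subseteq V_{2^{-n}}$, the $V_{2^{-n}}$ decrease with $n$, so $\{n\mid g\in V_{2^{-n}}\}$ is an initial segment and hence $\delta(g)\leqslant 2^{-n}\iff g\in V_{2^{-n}}$. Symmetry of the $V_{2^{-n}}$ gives $\delta(g\inv)=\delta(g)$, one has $\delta(1)=0$ (and $\delta(g)=0$ only for $g=1$ in the Hausdorff case of interest), and the square condition becomes the doubling rule: $\delta(g),\delta(h)\leqslant 2^{-n-1}$ forces $gh\in\big(V_{2^{-n-1}}\big)^2\subseteq V_{2^{-n}}$, i.e.
$$
\delta(gh)\leqslant 2\max\big(\delta(g),\delta(h)\big).
$$

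Next I would set $d(g,f)=\inf\big\{\sum_{i=0}^{k-1}\delta(h_i\inv h_{i+1})\mid k\geqslant1,\ h_0=g,\ h_k=f\big\}$. Concatenation of chains gives the triangle inequality, reversal of a chain together with $\delta(u\inv)=\delta(u)$ gives symmetry, and left-invariance is immediate since each term $\delta(h_i\inv h_{i+1})$ is unchanged under $h_j\mapsto ah_j$; so $d$ is a left-invariant pseudometric, and a metric when $G$ is Hausdorff, as it is here. The one-term chain $h_0=g,\ h_1=f$ gives the easy half $d(g,f)\leqslant\delta(g\inv f)$, so already $V_{2^{-n}}\subseteq B_d(2^{-n})\subseteq B_d(4\cdot2^{-n})$, which is the right-hand inclusion of the statement.

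The substance is the other half --- Birkhoff's trick, applied here to the square condition: there is an absolute constant $K$ (one can take $K=4$) with
$$
\delta(u_1u_2\cdots u_k)\leqslant K\big(\delta(u_1)+\cdots+\delta(u_k)\big)
$$
for every word. I would prove this by induction on $k$: writing $S$ for the right-hand sum, the cases $S=0$ (the $u_i$ then lie in the subgroup $\bigcap_nV_{2^{-n}}$, on which $\delta$ vanishes) and $KS\geqslant1$ (as $\delta\leqslant1$) are immediate, and otherwise one splits the word as $(u_1\cdots u_{p-1})\,u_p\,(u_{p+1}\cdots u_k)$, choosing $p$ from the running partial sums so that the head and the tail each carry $\delta$-mass at most $S/2$, applies the inductive hypothesis to the head and the tail, bounds $\delta(u_p)$ by $S$, pushes the three blocks into a common $V_{2^{-m}}$, and amalgamates them with $\big(V_{2^{-m}}\big)^2\subseteq V_{2^{-m+1}}$. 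This inequality yields $\tfrac1K\delta(g)\leqslant d(g,1)\leqslant\delta(g)$, since any chain from $1$ to $g$ displays $g$ as a word whose letters have $\delta$-sum equal to the chain's length. Replacing $d$ by $Kd$ only rescales all balls, and turns $V_{2^{-n}}\subseteq\{d\leqslant 2^{-n}\}$ into $V_{2^{-n}}\subseteq B_d(K\cdot2^{-n})$ and the inclusion $\{d\leqslant 2^{-n}/K\}\subseteq\{\delta\leqslant 2^{-n}\}=V_{2^{-n}}$ into $B_d(2^{-n})\subseteq V_{2^{-n}}$; with $K=4$ these are exactly the two displayed inclusions, and the resulting squeeze of the $B_d$-balls between the $V_{2^{-n}}$ also shows $d$ compatible.

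The one genuinely delicate point --- and where I expect the effort to lie --- is the inductive estimate just sketched: with the square rather than a cube condition the three-block amalgamation costs an extra doubling, and keeping the resulting absolute constant down to $K=4$ (instead of the larger value a crude count produces) requires more careful bookkeeping than I have indicated. A complete treatment is in \cite{diestel}, which I would simply import; and in any case any fixed $K$ suffices for everything that follows, since only the matching of dyadic scales, not the precise constant, is ever used.
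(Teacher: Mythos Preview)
The paper does not supply its own proof of this lemma: it is stated with attribution to Kakutani, and the reader is referred to \cite{diestel} for a proof of the precise formulation with the constant $4$. There is therefore nothing in the paper to compare your argument against. Your proposal in fact goes further than the paper, correctly outlining the path-metric construction $d(g,f)=\inf\sum\delta(h_i^{-1}h_{i+1})$ and isolating the one substantive step, the inductive inequality $\delta(u_1\cdots u_k)\leqslant K\sum_i\delta(u_i)$ under the doubling rule $\delta(gh)\leqslant 2\max(\delta(g),\delta(h))$; and, exactly as the paper does, you defer the bookkeeping that pins the constant down to $K=4$ to \cite{diestel}. Your final remark that any fixed $K$ suffices for the applications is also accurate: the lemma is invoked only in the proof of (1)$\Rightarrow$(2) of Theorem~\ref{main}, where one needs merely $V_{2^{-m}}\subseteq B_\partial(c\cdot 2^{-m})$ for some absolute constant $c$, and the value of $c$ is irrelevant to the contradiction derived there.
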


We now turn to the proof of Theorem \ref{main}.
\begin{proof}
(1)$\saa$(2): We claim first that there is an open  neighbourhood $U\ni 1$ so that, for all $k\geqslant 1$ and $g\in G$, 
$$
 g, g^2, g^3, \ldots, g^{2^k}\in U\;\;\saa\;\;   g\in B_d(2^{-k}).
$$

In order to see this, we assume the contrary.
Let $V_{2^{-0}}=G$ and, for $m\geqslant 1$, inductively define symmetric open sets $V_{2^{-m}}\ni 1$ as follows. 

Assume that $V_{2^{-m}}$ is the last term that has been defined thus far and let $n\geqslant m$ be large enough so that $B_d(n\inv)\subseteq V_{2^{-m}}$. Since the claim fails for $U=B_d(n\inv )$, there are  $k\geqslant 1$ and  $g\notin B_d(2^{-k})$ so that
$$
g, g^2, g^3, \ldots, g^{2^k}\in B_d(n\inv).
$$
Let $F=\{1,g, g\inv\}$ and note that, since $B_d(n\inv)$ is symmetric, $F^{2^k}\subseteq B_d(n\inv )$. Therefore, as $F$ is finite, we can pick a sufficiently small symmetric open $W\ni1$ so that also $(WFW)^{2^k}\subseteq  B_d(n\inv )\subseteq V_{2^{-m}}$. We then set
$$
V_{2^{-m-k}}=WFW,\quad V_{2^{-m-k+1}}=(WFW)^2,\quad \ldots\quad , V_{2^{-m-1}}=(WFW)^{2^{k-1}}.
$$
At the next stage, we begin with the term $V_{2^{-m-k}}$ and proceed as above. 

Therefore, at the end of the construction, we have a sequence $G=V_{2^{-0}}\supseteq V_{2^{-1}}\supseteq \ldots$ of symmetric open sets forming a neighbourhood basis at $1$ so that $\big(V_{2^{-m}}\big)^2\subseteq V_{2^{-m+1}}$ for all $m\geqslant 0$.  We now apply Lemma \ref{kakutani} to the sequence $(V_{2^{-m}})_{m\geqslant 0}$ to obtain a compatible left-invariant metric $\partial$ satisfying
$$
B_\partial({2^{-m}})\subseteq V_{2^{-m}}\subseteq B_\partial(4\cdot2^{-m}).
$$

Note now that there are infinitely many $m$ so that some stage in the construction began with the  term $V_{2^{-m}}$. So fix such an $m$ and let $k$ and $g$ be as in the construction step. Then $g\in WFW= V_{2^{-m-k}}\subseteq B_\partial(4\cdot2^{-m-k})$ and $g\notin B_d(2^{-k})$, whence 
$$
2^{m-2}\cdot \partial(g,1)\leqslant  2^{-k}\leqslant  d(g,1).
$$
Therefore, $\partial$ is a compatible left-invariant metric on $G$, but ${\rm id}\colon (G,\partial)\til (G,d)$ is not Lipschitz for short distances, contradicting the minimality of $d$ and thus proving the claim.

So, using the claim,  fix $U\ni1 $ open so that, for all $g\in G$ and $k\geqslant 1$, 
$$
g, g^2, g^3, \ldots, g^{2^k}\in U\;\;\saa \;\;  g\in B_d(2^{-k})
$$
and pick some open $V\ni1$ so that $V^2\subseteq U$. Now suppose $g, g^2, g^3, \ldots, g^m\in V$ for some $m$ and let $k\geqslant 0$ be so that $2^k\leqslant m<2^{k+1}$. Then also $g^{2^k+n}=g^{2^k}g^n\in V^2\subseteq U$ for all $n\leqslant 2^k$ and so $g^i\in U$ for all $i\leqslant 2^{k+1}$. In particular,   $g\in B_d(2^{-k-1})\subseteq B_d(m\inv)$.
In other words, 
$$
g, g^2, g^3, \ldots, g^{m}\in V\;\;\saa \;\;  d(g,1)\leqslant \frac 1m,
$$
which proves (2).


(2)$\saa$(3): Assume that $U\ni1 $ is an in (2). By shrinking $U$ if needed, we may assume that $U=B_d(p\inv)$ for some integer $p\geqslant 1$.
Now, choose a symmetric open $W\ni 1$ so that $W^{2p}\subseteq U$. We claim that 
$$
g, g^2, g^3, \ldots, g^n\in W\;\;\saa\;\; n\cdot d(g^n,1)\leqslant 4p\cdot d(g^n,1),
$$
which thus verifies (3). 

Indeed, suppose $g, g^2, g^3, \ldots, g^n\in W$ and $g\neq 1$. Then also $g, g^2, g^3, \ldots, g^{2pn}\in W^{2p}\subseteq U$ and thus, if $m$ is minimal so that $g^{m+1}\notin U$, we have $m\geqslant 2pn$ and  $d(g, 1)\leqslant m\inv$. Let now $k\geqslant 1$ be such that $kn\leqslant m<m+1\leqslant(k+1)n$. Then 
\[\begin{split}
d\big(g^{(k+1)n},1\big)
&\geqslant d\big(g^{m+1},1\big)-d\big(g^{(k+1)n}, g^{m+1}\big)\\
&= d\big(g^{m+1},1\big)-d\big(g^{(k+1)n-(m+1)},1\big)\\
&\geqslant p\inv-\big[(k+1)n-(m+1)\big]\cdot d(g,1)\\
&\geqslant p\inv-\frac nm,
\end{split}\]
whereby
\[\begin{split}
d(g^n,1)
\geqslant \frac{d\big(g^{(k+1)n},1\big)}{k+1}
\geqslant \frac{\frac1p-\frac nm}{k+1}
\geqslant \frac{\frac1p-\frac n{2pn}}{2k}
= \frac1{4p}\cdot \frac 1k\geqslant \frac1{4p}\cdot \frac nm\geqslant n\frac1{4p}\cdot d(g,1)
\end{split}\]
as claimed.


(3)$\saa$(4): Let $\eps>0$ and $K\geqslant 1$ be as in (3) and set $U=B_d(\frac  {\eps}{2K})$. Now, suppose $g\neq 1$ and that $g,g^2,\ldots ,g^n\in U$ for some $n\geqslant 1$. Let $m\geqslant 1$ be so that $g\in B_d(\frac \eps m)\setminus B_d(\frac \eps{m+1})$. Then, by (3), we have
$$
K\cdot d(g^m,1)\geqslant m\cdot d(g,1)>\eps\frac m{m+1}\geqslant \frac {\eps}2,
$$
i.e., $g^m\notin U$, whereby $m>n$. It thus follows that $d(g,1)\leqslant \frac \eps n$ and hence, by (3) again, that $n\cdot d(g,1)\leqslant K\cdot d(g^n,1)$. In other words, 
$$
g,g^2,\ldots ,g^n\in U\;\;\saa\;\; n\cdot d(g,1)\leqslant K\cdot d(g^n,1).
$$


Now, by shrinking $U$ and increasing $K$, we may suppose that $U=B_d(2^{-k}\big)$  and $K=2^k$ for some $k\geqslant 3$.
Set also $V=B_d(2^{-2k})$, whereby $V^{2^k}\subseteq U$. We claim that, for all $m$ and $g$, 
$$
\big(
\a i\leqslant 2^m\colon g^i\in V
\big)
\;\;\&\; \;
\big(
\a i\leqslant m+2k+1\colon g^{2^i}\in V
\big) 
\;\;\saa\;\; 
\big(
\a i\leqslant 2^{m+1}\colon g^i\in V
\big).
$$ 
Indeed, suppose that $\big(\a i\leqslant 2^m\colon g^i\in V\big)$ and $\big(\a i\leqslant m+2k+1\colon g^{2^i}\in V\big)$. Assume also that $2^m<i<2^{m+2k+1}$ is given. Write $i=2^{p_r}+2^{p_{r-1}}+\ldots+2^{p_1}+j$ for some $m\leqslant p_1<p_2<\ldots<p_r<m+2k+1$ and $j<2^m$ and note that, since $r\leqslant 2k+1\leqslant 2^k-1$, we have
$$
g^i=g^{2^{p_r}}\cdots g^{2^{p_1}}\cdot g^j\in V^rV\subseteq V^{2^k}\subseteq U.
$$
Therefore, as also $g^{2^{m+2k+1}}\in V$, we see that $g^i\in U$ for all $i\leqslant 2^{{m+2k+1}}$.
By the hypothesis on $U$, it follows that 
$$
2^{{m+2k+1}}\cdot d(g,1)\leqslant 2^k\cdot d(g^{2^{m+2k+1}},1)\leqslant 2^k\cdot 2^{-k}=1,
$$ 
i.e., $d(g,1)\leqslant 2^{-({m+2k+1})}$. So, using that $d(g^i,1)\leqslant i\cdot d(g,1)$, this shows that $g^i\in V$ for all $i\leqslant 2^{m+1}$, proving the claim.

Put now $W=B_d(2^{-4k})$ and assume that $n$ and $g$ are given so that 
$$
g, g^2, g^4, g^8, \ldots, g^{2^n}\in W.
$$  
Then, since $W^{2^{2k}}\subseteq V$, we have that $g^{2^i}\in V$ for all $i\leqslant n+2k$. Using our claim to induct on $m=1,2, \ldots, n-1$, we see that $g^i\in V\subseteq U$ for all $i\leqslant 2^n$, and thus $2^n\cdot d(g,1)\leqslant 2^k\cdot d(g^{2^n},1)$, which proves (4).


(4)$\saa$(1): Suppose that $\partial$ is another compatible left-invariant metric on $G$. Fix $\eps,\eta>0$ and $K\geqslant 1$ so that 
$$
g,g^2,g^4, g^8, \ldots, g^{2^n}\in B_d(\eps)\;\;\saa\;\;  2^n\cdot d(g,1) \leqslant  K\cdot d(g^{2^n},1)
$$
and $B_\partial(\eta)\subseteq B_d(\eps)$. Note then that, if $n\geqslant 0$ and $g\in B_\partial({\frac \eta {2^n}})$, we also have  $g,g^2,g^4, g^8, \ldots, g^{2^n}\in B_\partial(\eta)\subseteq B_d(\eps)$, whence 
$$
2^n\cdot d(g,1) \leqslant  K\cdot d(g^{2^n},1)\leqslant K\eps
$$
i.e., $g\in B_d({\frac{K\eps}{2^n}})$. In other words, $B_\partial({\frac \eta {2^n}})\subseteq B_d({\frac{K\eps}{2^n}})$ for all $n\geqslant 0$.

Now, if $g\in B_\partial(\eta)$ is any non-identity element, pick $n\geqslant 0$ so that $g\in B_\partial({\frac \eta {2^n}})\setminus B_\partial({\frac \eta {2^{n+1}}})\subseteq B_d({\frac{K\eps}{2^n}})$. Then, 
$$
d(g,1)
\leqslant{\frac{K\eps}{2^n}} 
=\frac{2K\eps}{\eta}\cdot \frac \eta {2^{n+1}}
\leqslant \frac{2K\eps}{\eta}\cdot \partial(g,1),
$$
showing that the map ${\rm id}\colon (B_\partial(\eta),\partial)\til (B_\partial(\eta),d)$ is $\frac{2K\eps}{\eta}$-Lipschitz.
\end{proof}

Since a compatible left-invariant metric on a topological group $H$ need not extend to a compatible left-invariant metric on a supergroup $G$, it is far from clear from the definition of minimality that the restriction of a minimal metric on $G$ to a subgroup $H$ is also minimal on $H$. However, using instead the reformulations of Theorem \ref{main}, this becomes obvious, whence the following corollary.

\begin{cor}
The class of topological groups admitting minimal metrics is closed under passing to subgroups.
\end{cor}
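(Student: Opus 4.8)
The plan is to bypass Definition \ref{defi min} entirely and argue through the intrinsic criterion of Theorem \ref{main}, specifically condition (2), which refers only to the metric and to powers of single elements inside a fixed identity neighbourhood — data that survives restriction to a subgroup without change.

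So suppose $G$ admits a minimal metric $d$ and let $H\leqslant G$ be an arbitrary subgroup. First I would set $\partial=d|_{H\times H}$ and check that $\partial$ is a legitimate candidate for a minimal metric on $H$: it induces the subspace topology on $H$ (a standard property of metric subspaces), hence is compatible with the topological group $H$, and it is left-invariant since $\partial(hg,hf)=d(hg,hf)=d(g,f)=\partial(g,f)$ for all $h,g,f\in H$.

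Next, apply the implication (1)$\saa$(2) of Theorem \ref{main} to $d$ on $G$ to obtain an open $U\ni1$ in $G$ with $g,g^2,\ldots,g^n\in U\saa d(g,1)\leqslant\frac1n$. Put $U'=U\cap H$, an open identity neighbourhood in $H$. If $g\in H$ and $g,g^2,\ldots,g^n\in U'$, then a fortiori $g,g^2,\ldots,g^n\in U$ — using here that the powers of $g$ computed in $H$ coincide with those computed in $G$ — so $d(g,1)\leqslant\frac1n$, that is $\partial(g,1)\leqslant\frac1n$. Thus $\partial$ satisfies condition (2) of Theorem \ref{main} relative to the group $H$, and applying (2)$\saa$(1) of that theorem over $H$ yields that $\partial$ is minimal on $H$.

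The only point deserving comment — and the reason the statement is not immediate — is why one cannot reason straight from the definition: minimality of $\partial$ on $H$ demands comparison with \emph{every} compatible left-invariant metric on $H$, and, as noted just before the corollary, such a metric need not extend to a compatible left-invariant metric on $G$, so the minimality of $d$ on $G$ offers no direct leverage. Routing the argument through the power-growth reformulation of Theorem \ref{main} is precisely what sidesteps this; everything else is bookkeeping.
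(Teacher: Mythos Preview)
Your proof is correct and matches the paper's own argument exactly: the paper simply remarks that, while the definition of minimality does not obviously pass to subgroups (since left-invariant metrics on $H$ need not extend to $G$), the intrinsic reformulations of Theorem~\ref{main} make the conclusion ``obvious,'' and you have written out precisely those obvious details using condition~(2). There is nothing to add.
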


\begin{exa}
Using the spectral theorem, it is not hard to verify that the metric induced by the operator norm is minimal on the unitary group $\ku U(\ku H)$ of separable infinite-dimensional Hilbert space. Indeed, set
$$
U=\{u\in \ku U(\ku H)\del \norm{u-{\Id}}<1\}
$$
and suppose $u, u^2, u^3, \ldots, u^n\in U$ for some fixed $u\in \ku U(\ku H)$. Then, by the spectral theorem, there is a $\sigma$-finite measure space $(X,\mu)$ and a measurable function $\phi\colon X\til \T$ so that $u$ is unitarily equivalent to the multiplication operator $M_\phi$ on $L^2(X,\mu)$ defined by
$$
\big(M_\phi\xi\big)(x)=\phi(x)\xi(x).
$$
It follows that, for $i=1,\ldots, n$, 
$$
{\rm ess}\sup\{|{\phi^i}(x)-1|\del x\in X\}= \norm{M_{\phi^i-1}}_\infty=\norm{M_\phi^i-\Id}_\infty=\norm{u^i-\Id}<1
$$
and hence that $|\phi(x)-1|<\frac 2n$ for almost all $x\in X$. In other words, $\norm{u-\Id}<\frac 2n$. So
$$
u,u^2, \ldots, u^n\in U\saa  \norm{u-\Id}<\frac 2n,
$$
showing that the metric is minimal.
\end{exa}

Condition (\ref{gleason cond}) of Theorem \ref{main}  has been studied earlier in the literature as part of the solution to Hilbert's fifth problem due to A. Gleason, D. Montgomery, H. Yamabe and L. Zippin. Indeed, in the book \cite{tao} by T. Tao, metrics satisfying this condition are termed {\em weak Gleason} as they underlie A. Gleason's results in \cite{gleason}.  In particular, in \cite{tao} it is shown that a locally compact metrisable group is a Lie group if and only if it has a weak Gleason metric. Moreover, in the locally compact setting, every weak Gleason metric is actually {\em Gleason}, meaning that it satisfies a further estimate on commutators (cf. Theorem 1.5.5 \cite{tao}).

Let us also mention that, if $d$ is a minimal metric on $G$, then there is a constant $K$ and an open set $V\ni 1$ so that
$$
f,g,h\in V\;\;\saa\;\;d(fh,gh)\leqslant K\cdot d(f,g),
$$
i.e., right-multiplication is $K$-Lipschitz in a neighbourhood of $1$.
To see this, let $B_d(\eps)$ be so that $d(g,1)\leqslant 1/n$ whenever $g, g^2, \ldots, g^n\in B_d(\eps)$. Then, given distinct $g,f\in B_d(\frac\eps4)$, let $n\geqslant 1$ be so that $g\inv f\in B_d\big(\frac \eps{n2}\big)\setminus B_d\big(\frac \eps{(n+1)2}\big)$. So, if $h\in B_d(\frac\eps4)$, we have $(h\inv g\inv fh)^i=h\inv (g\inv f)^ih\in B_d(\eps)$ for $i=1,\ldots, n$,
whereby
$$
d(fh,gh)=d(h\inv g\inv fh,1)\leqslant \frac 1n\leqslant \frac 2{n+1}\leqslant \frac 4\eps \cdot d(f,g).
$$

By Condition (\ref{strongest cond}) of Theorem \ref{main}, it is easy to see that, if $G$ is a group with a weak Gleason metric, then $G$ is {\em NSS}, i.e., has {\em no small subgroups}, which simply means that there is a neighbourhood $U\ni 1$ not containing any non-trivial subgroup. Moreover, in the locally compact metrisable case, being NSS is equivalent to being a Lie group and thus also to having a weak Gleason metric (see the exposition in \cite{montgomery} or \cite{tao}). By Theorem \ref{main}, weak Gleason and minimal metrics coincide, but we shall prefer the latter more descriptive terminology.

Whereas a minimal metric establishes a canonical local Lipschitz geometry on a topological group, we will now combine this with the analysis from \cite{coarse} of  the corresponding problem at the large scale.
\begin{defi}
A compatible left-invariant metric $d$ on a topological group $G$ is said to be {\em maximal} if, for every other compatible left-invariant metric $\partial$ on $G$, the map 
$$
{\rm id}\colon (G, d)\to (G,\partial)
$$
is Lipschitz for large distances, that is, $\partial \leqslant K\cdot d+ C$ for some constants $K,C$.
\end{defi}
Clear, any two maximal metrics $d$ and $\partial$ on a topological group $G$ are {\em quasi-isometric}, that is $\frac 1K\cdot d-C\leqslant \partial \leqslant K\cdot d+ C$ for some constants $K,C$. 
\begin{lemme}
Suppose that $d$ and $\partial$ are  both simultaneously minimal and maximal metrics on a topological group $G$. Then $d$ and $\partial$ are bi-Lipschitz equivalent, i.e.,
$$
\frac 1L\cdot d\leqslant \partial \leqslant  L\cdot d
$$
for some constant $L$.
\end{lemme}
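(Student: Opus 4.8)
The plan is to use left-invariance to reduce the statement to comparing $d(h,1)$ with $\partial(h,1)$ for a single element $h\in G$, and then to split into two regimes: ``$h$ close to $1$'', where minimality does the work, and ``$h$ bounded away from $1$'', where maximality does. Since $d$ and $\partial$ are left-invariant, $d(g,f)=d(f^{-1}g,1)$ and $\partial(g,f)=\partial(f^{-1}g,1)$, so it suffices to produce a constant $L$ with $\frac1L\,d(h,1)\leqslant\partial(h,1)\leqslant L\,d(h,1)$ for all $h\in G$.

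For the close-to-$1$ regime, minimality of $\partial$ (applied with $d$ as the competing metric) yields, via Definition~\ref{defi min} and the fact that the $d$-balls about $1$ form a neighbourhood basis, constants $\delta_1>0$ and $K_1\geqslant 1$ with $\partial(h,1)\leqslant K_1\,d(h,1)$ whenever $d(h,1)<\delta_1$; symmetrically, minimality of $d$ gives $\delta_2>0$ and $K_2\geqslant 1$ with $d(h,1)\leqslant K_2\,\partial(h,1)$ whenever $\partial(h,1)<\delta_2$. Setting $\delta_0=\min(\delta_1,\,\delta_2,\,\delta_1/K_2,\,\delta_2/K_1)$ and $K=\max(K_1,K_2)$, one checks that the two one-sided bounds reinforce each other on $\{h:\min(d(h,1),\partial(h,1))<\delta_0\}$: for instance $d(h,1)<\delta_0$ gives $\partial(h,1)\leqslant K_1\,d(h,1)<\delta_2$, whence also $d(h,1)\leqslant K_2\,\partial(h,1)$, so that $\frac1K\,d(h,1)\leqslant\partial(h,1)\leqslant K\,d(h,1)$ there.

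For the complementary regime, where $d(h,1)\geqslant\delta_0$ and $\partial(h,1)\geqslant\delta_0$ simultaneously, maximality of $d$ and of $\partial$ provides constants $\lambda\geqslant1$, $c\geqslant0$ with $\partial\leqslant\lambda d+c$ and $d\leqslant\lambda\partial+c$ everywhere; then $\partial(h,1)\leqslant\lambda d(h,1)+c\leqslant(\lambda+c/\delta_0)\,d(h,1)$ and, symmetrically, $d(h,1)\leqslant(\lambda+c/\delta_0)\,\partial(h,1)$. Taking $L=\max(K,\ \lambda+c/\delta_0)$ and combining the two regimes (the case $h=1$ being trivial) gives $\frac1L\,d(h,1)\leqslant\partial(h,1)\leqslant L\,d(h,1)$ for every $h$, and hence $\frac1L\,d\leqslant\partial\leqslant L\,d$ by left-invariance.

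I do not anticipate a real obstacle: the only slightly delicate point is arranging the thresholds in the close-to-$1$ step so that each one-sided minimality estimate forces the hypothesis of the other, and observing that maximality was imported in exactly the affine form $\partial\leqslant\lambda d+c$ needed to absorb the additive constant once $d(h,1)$ is bounded below by $\delta_0$. If one prefers to avoid the threshold bookkeeping, one can instead apply the remark that any two bounded minimal metrics are bi-Lipschitz to the truncations $\min(d,1)$ and $\min(\partial,1)$, which are themselves minimal, and then patch with the same maximality estimates; the case analysis is essentially the same.
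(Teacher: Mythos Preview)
Your proof is correct and follows essentially the same strategy as the paper: split into a ``close to $1$'' regime handled by minimality and a ``bounded away from $1$'' regime where the additive constant from maximality is absorbed by the lower bound on distance. The only cosmetic difference is that the paper proves just one inequality $d\leqslant L\cdot\partial$ (using minimality of $d$ and maximality of $\partial$, splitting according to whether $f^{-1}g$ lies in a single fixed neighbourhood $V$) and then invokes symmetry, thereby avoiding your threshold juggling with $\delta_0$.
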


\begin{proof}
Since $d$ is minimal, there is an identity neighbourhood $V$ and a constant $K$ so that
$$
{\rm id}\colon (V,\partial)\to (V,d)
$$
is $K$-Lipschitz. On the other hand, as $\partial$ is maximal, there are constants $M,N$ so that $d\leqslant M\cdot \partial +N$. It thus follows that, for $x\in G$ and $v\in V$, 
$$
d(xv,x)=d(v,1)\leqslant K\cdot \partial (v,1)=K\cdot \partial (v,1)=K\cdot \partial (xv,x),
$$
while, for $a\notin V$,
\[\begin{split}
d(xa,x)
&\leqslant M\cdot \partial(xa,x)+N\\
&\leqslant M\cdot \partial(xa,x)+\frac{N\cdot \partial(a,1)}{\inf(\partial(y,1)\mid y\notin V)}\\
&\leqslant \Big(M+\frac{N}{\inf(\partial(y,1)\mid y\notin V)}\Big)\cdot \partial(xa,x).
\end{split}\]
So $d\leqslant L\cdot \partial$, where  $L=\max\Big\{K,M+\frac{N}{\inf(\partial(y,1)\mid y\notin V)}\Big\}$ and by symmetry we find that $d$ and $\partial$ are bi-Lipschitz equivalent.
\end{proof}

Thus, if $G$ admits a metric that is simultaneously minimal and maximal, then this defines a canonical global Lipschitz geometric structure on $G$.  To characterise this situation, we need a few new concepts from \cite{coarse}. 

A topological group is {\em Baire} if it satisfies the Baire category theorem, that is, if the intersection of countably many dense open sets is dense in $G$. Also, we say that  $G$ is {\em European} if it is Baire and, for every identity neighbourhood $V$, there is is a countable set $D\subseteq G$ so that $G=\langle V\cup D\rangle$. Clearly every Polish group and every connected completely metrisable group, e.g., the additive group $(X,+)$ of a Banach space, is European.  Also, a locally compact Hausdorff group is European if and only if it is $\sigma$-compact.

A subset $B$  of a topological group $G$ is {\em coarsely bounded} if it has finite diameter in every continuous left-invariant pseudometric on $G$. If $G$ is European, this is equivalent to asking that, for every identity neighbourhood $V$, there is a finite set $F\subseteq G$ and a $k$ so that $B\subseteq (FV)^k$.

Now, as opposed to minimal metrics, we do have a characterisation of the existence of maximal metrics. Namely, as shown in \cite{coarse}, a metrisable European group $G$ admits a maximal metric $d$ if and only if it is algebraically generated by a coarsely bounded set, which furthermore may be taken to be an identity neighbourhood $V$. Moreover, in this case, the maximal metric $d$ will be quasi-isometric to the word metric 
$$
\rho_V(g,f)=\min(k\mid \e v_1, \ldots, v_k\in V^\pm\colon  g=fv_1\cdots v_k)
$$
associated with $V$.

\begin{thm}\label{min-max}
Let $G$ be a Polish or, more generally, a European topological group  generated by a coarsely bounded set and assume that $G$ has a minimal metric.
Then $G$ has a metric that is simultaneously maximal and minimal and hence $G$ has a well-defined Lipschitz geometric structure.
\end{thm}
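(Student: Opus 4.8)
The plan is to take the given minimal metric $d$ together with the word metric $\rho_V$ attached to a suitable generating set, and fuse them into a single compatible left-invariant metric $\rho$ that coincides with $d$ near the identity and is quasi-isometric to $\rho_V$ at the large scale. By the results of \cite{coarse} recalled above, since $G$ is European and generated by a coarsely bounded set there is a coarsely bounded open identity neighbourhood $V$ with $\langle V\rangle=G$ for which the maximal metric of $G$ is quasi-isometric to $\rho_V$; replacing $V$ by $V\cup V^{-1}$ we may assume $V=V^{-1}$. After replacing $d$ by $\min(d,1)$ — still compatible, left-invariant and minimal, since minimality is a local property and the two metrics agree on a neighbourhood of $1$ — we may also assume $d\leqslant1$. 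I would then set
$$
\rho(g,f)=\inf\Big\{\sum_{i=1}^{k}d(h_{i-1},h_i)\;:\;k\geqslant1,\ h_0=g,\ h_k=f,\ \text{and } h_{i-1}^{-1}h_i\in V \text{ for each }i\Big\}.
$$
Such chains exist because $V$ generates $G$, so $\rho$ is finite; it is left-invariant and symmetric (as $V=V^{-1}$ and $d$ is symmetric), obeys the triangle inequality by concatenation of chains, and is a genuine metric since the triangle inequality for $d$ forces $\rho\geqslant d$.

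The first point to record is that $\rho$ and $d$ agree on $V$-close pairs: if $f^{-1}g\in V$, the one-term chain gives $\rho(g,f)\leqslant d(g,f)$, while $\rho\geqslant d$ always; hence $\rho(g,f)=d(g,f)$ whenever $f^{-1}g\in V$. In particular $\rho$ coincides with $d$ on a neighbourhood of $1$, so $\rho$ induces the group topology and is compatible. It also yields minimality at once: given any compatible left-invariant $\partial$, minimality of $d$ furnishes a neighbourhood $U\ni1$ and a constant $K$ with $d\leqslant K\partial$ on $U$; on a smaller neighbourhood $W\subseteq U$ with $W^{-1}W\subseteq V$ we then have $\rho=d\leqslant K\partial$, so ${\rm id}\colon(G,\partial)\to(G,\rho)$ is Lipschitz near $1$. (Alternatively one checks condition (\ref{strongest cond}) of Theorem \ref{main} for $\rho$, again using $\rho=d$ near $1$.)

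The substantive step is that $\rho$ is quasi-isometric to $\rho_V$, and hence maximal. One inequality is immediate: along a $\rho_V$-geodesic from $g$ to $f$ each edge lies in $V\subseteq B_d(1)$ and so contributes at most $1$ to $\rho$, giving $\rho\leqslant\rho_V$. For the reverse estimate I would thin a cheap $V$-chain into a short one. Fix $\delta>0$ with $B_d(\delta)\subseteq V$. Given a $V$-chain $h_0,\dots,h_k$ from $g$ to $f$ with $\sum_i d(h_{i-1},h_i)<t$, parametrise it by cumulative $d$-length $\sigma_i=\sum_{j\leqslant i}d(h_{j-1},h_j)$ and retain: the endpoints; for each positive integer $\ell$ with $\ell\delta\leqslant\sigma_k$, the first vertex $h_{i_\ell}$ with $\sigma_{i_\ell}\geqslant\ell\delta$; and the vertex $h_{i_\ell-1}$ immediately preceding it. There are $O(t/\delta)$ retained vertices, and any two consecutive ones (in the original order) are either joined by an edge of the chain — hence differ by an element of $V$ — or lie within $d$-distance $<\delta$ of each other — hence differ by an element of $B_d(\delta)\subseteq V$. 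Thus they form a $V$-chain of length $O(t/\delta)$ from $g$ to $f$, which gives $\rho_V\leqslant \frac C\delta\,\rho+C$ for an absolute constant $C$. Therefore $\rho$ is quasi-isometric to $\rho_V$, hence to the maximal metric of $G$, and composing with the maximality of the latter, every compatible left-invariant $\partial$ satisfies $\partial\leqslant A\rho+B$; so $\rho$ is maximal. Being simultaneously minimal and maximal, $\rho$ defines the asserted canonical global Lipschitz structure by the Lemma immediately preceding the theorem.

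The one delicate point is the thinning step. Naively coalescing consecutive short steps of the chain need not reduce the number of edges when the chain meanders repeatedly near $1$, and a single step of the chain can itself be of $d$-length close to $1$; parametrising by cumulative $d$-length and keeping the vertex just before each cut handles both phenomena simultaneously, using that such a long step is by hypothesis already a move inside $V$. Everything else — that $\rho$ is a metric, that it is compatible, and that it is minimal — is routine once the identity ``$\rho=d$ near $1$'' is available.
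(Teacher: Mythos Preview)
Your proof is correct and follows essentially the same route as the paper: the metric $\rho$ you construct is exactly the paper's $\partial(g,f)=\inf\big(\sum d(v_i,1)\mid v_i\in V,\; g=fv_1\cdots v_n\big)$ (rewritten via $h_i=gv_1\cdots v_i$), and both arguments hinge on the observation that this metric agrees with $d$ on a neighbourhood of $1$, yielding minimality, while being quasi-isometric to $\rho_V$, yielding maximality. The only difference is that the paper imports the quasi-isometry $\partial\sim\rho_V$ from \cite{coarse}, whereas your thinning-by-cumulative-length argument supplies a clean self-contained proof of it.
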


\begin{proof}Fix a minimal metric $d$ on $G$ and a coarsely bounded identity neighborhood $V$ generating $G$. Now, as shown in \cite{coarse}, the formula
$$
\partial(g,f)=\inf\big(\sum_{i=1}^nd(v_i, 1)\mid v_i\in V\;\&\; g=fv_1\cdots v_n\big)
$$
defines a compatible left-invariant metric on $G$, which is quasi-isometric to the word metric $\rho_V$. It thus follows that $\partial$ is quasi-isometric to a maximal metric on $G$ and therefore maximal itself. 

Observe now that, if $W$ is a symmetric identity neighbourhood so that $W^2\subseteq V$, then any two elements of $W$ differ on the right by an element of $V$ and so the two metric $d$ and $\partial$ agree on $W$. It therefore follows that $\partial$ is also minimal.
\end{proof}

Outside of the class of locally compact groups, the problem of determining which groups admit a minimal metric is unsolved. However, as is evident from Condition (\ref{strongest cond}) of Theorem \ref{main}, a group $G$ with a minimal metric must be {\em uniformly NSS} in the sense of the following definition.
\begin{defi} 
A topological group $G$ is  {\em uniformly NSS} group if there is an open set $U\ni 1$ so that, for every open $V\ni 1$, there is some $n$ for which
$$
g, g^2, \ldots, g^n\in U \;\;\saa\;\; g\in V.
$$
\end{defi}
As noted by P. Enflo \cite{enflo}, a uniformly NSS group is metrisable. Indeed, 
let $U\ni 1$ be as in the definition of the uniform NSS property and pick open neighbourhoods $W_n\ni1 $ so that $(W_n)^n\subseteq U$. Now, suppose $V\ni 1$ is open and let $n$ be such that $g\in V$ whenever $g, g^2, \ldots, g^n\in U$. Then clearly $g\in W_n$ implies that $g\in V$, i.e., $W_n\subseteq V$. Thus, the sets $W_n$ form a countable neighbourhood basis at $1$ and $G$ is metrisable by the result of Birkhoff and Kakutani. 

For other interesting facts about uniformly NSS groups, including that every Banach-Lie group is uniformly NSS, one may consult the paper \cite{pestov} by S. A. Morris and V. Pestov. In particular, the authors show that uniformly NSS groups are {\em locally minimal}, which we shall not define here. However, by essentially the same proof, we may prove the following.

\begin{prop}
Suppose $G\curvearrowright X$ is a continuous isometric action of a uniformly NSS topological group $G$, as witnessed by an identity neighbourhood $U$, on a metric space $(X,d)$. Assume also that, for some $\eps>0$ and $x\in X$, we have
$$
g\notin U\saa d(gx,x)>\eps.
$$
Then the orbit map 
$$
g\in G\mapsto gx\in X
$$
is a uniform embedding of $G$ into $X$.
\end{prop}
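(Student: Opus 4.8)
The plan is to prove that the orbit map $\pi\colon g\mapsto gx$ is a uniform homeomorphism onto its image $Gx\subseteq X$, i.e.\ that $\pi$ is uniformly continuous, injective, and has uniformly continuous inverse; since $X$ is metric, this is exactly the assertion that $\pi$ is a uniform embedding with respect to the left uniformity on $G$. Everything will hinge on the single remark that, the action being isometric, $d(gx,hx)=d(x,g\inv hx)$, so that the only quantities that matter are the displacements $d(x,kx)$ for $k\in G$.

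For uniform continuity of $\pi$ itself I would simply invoke continuity of the action at the point $(1,x)$: this produces, for each $\delta>0$, an identity neighbourhood $V$ with $d(x,kx)<\delta$ for all $k\in V$, whence $g\inv h\in V$ forces $d(gx,hx)=d(x,g\inv hx)<\delta$. This half is routine and uses neither the NSS hypothesis nor the displacement assumption.

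The content is the uniform continuity of $\pi\inv$, i.e.\ that for every identity neighbourhood $V$ there is $\delta>0$ with $d(x,kx)<\delta\;\saa\;k\in V$. Here I would combine the uniform NSS property with the hypothesis on $\eps$. Fix $V$, let $U$ be the identity neighbourhood witnessing the uniform NSS property of $G$ — the same $U$ appearing in the displacement hypothesis — pick $n$ so that $g,g^2,\ldots,g^n\in U\;\saa\;g\in V$, and set $\delta=\eps/n$. The key estimate is the telescoping inequality $d(x,k^jx)\leqslant j\cdot d(x,kx)$, valid for every $j\geqslant 1$ because each step $d(k^ix,k^{i+1}x)$ equals $d(x,kx)$ by isometry. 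Now suppose $d(x,kx)<\delta$; if some power $k^j$ with $1\leqslant j\leqslant n$ lay outside $U$, the displacement hypothesis would give $d(x,k^jx)>\eps$, contradicting $d(x,k^jx)\leqslant j\delta\leqslant\eps$. Hence $k,k^2,\ldots,k^n\in U$, so $k\in V$, and translating back via $d(gx,hx)=d(x,g\inv hx)$ yields uniform continuity of $\pi\inv$. The same estimate applied with $d(x,kx)=0$ shows that $kx=x$ forces $k$ into every identity neighbourhood, hence $k=1$ as $G$ is Hausdorff, giving injectivity of $\pi$.

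As for the main obstacle: there is no deep one. The only subtlety is the order of quantifiers — $\delta$ must be chosen \emph{after} $V$, since $\delta=\eps/n$ and $n$ depends on $V$ through the uniform NSS property — together with running the minimal-counterexample argument on the finitely many powers $k,k^2,\ldots,k^n$. It is worth stressing that no metric on $G$, no completeness, and no local compactness enters; the statement is purely about the left uniform structure of $G$, which is precisely why essentially the Morris--Pestov argument for local minimality adapts verbatim.
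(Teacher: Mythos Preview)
Your proposal is correct and follows essentially the same route as the paper's proof: both dispense with forward uniform continuity as routine, then fix $V$, extract $n$ from the uniform NSS property, and use the telescoping estimate $d(x,k^jx)\leqslant j\cdot d(x,kx)$ together with the displacement hypothesis to force $k,k^2,\ldots,k^n\in U$ whenever $d(x,kx)<\eps/n$. The only cosmetic difference is that the paper phrases the key step contrapositively (if $g\notin V$ then some $g^i\notin U$, whence $d(gx,x)>\eps/n$), whereas you argue directly; your added remarks on injectivity and on the order of quantifiers are correct but not needed for the bare statement.
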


\begin{proof}As the orbit map is easily uniformly continuous, to see that it is a uniform embedding of $G$ into $X$, it suffices to  show that, for every identity neighbourhood $V$, there is $\eta>0$ so that 
$d(gx,x)\geqslant \eta$ whenever $g\notin V$. So let $V$ be given and pick $n$ so that $g\in V$ whenever $g, g^2, \ldots, g^n\in U$. Then, if $g\notin V$, there is $i\leqslant n$ so that $g^i\notin U$ and thus also $n\cdot d(gx,x)\geqslant d(g^ix,x)>\eps$. In other words, $d(g,1)\geqslant \frac \eps n$ for all $g\notin V$.
\end{proof}

Recall that a topological group $G$ is said to be a {\em SIN} group (for {\em small invariant neighbourhoods}) if there a a neighbourhood basis at the identity consisting of conjugacy invariant sets. In the context of metrisable groups, these are, by a result of V. Klee \cite{klee}, simply the groups admitting a compatible bi-invariant metric.

\begin{cor}\label{SIN+minimal}
Let $G$ be a SIN group with a minimal metric. Then $G$ admits a bi-invariant minimal metric.
\end{cor}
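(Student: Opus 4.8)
The plan is to start with an arbitrary minimal metric $d$ on $G$ together with a compatible bi-invariant metric $\partial$ (which exists by Klee's theorem, since $G$ is SIN), and to produce from these a single metric that is simultaneously bi-invariant and minimal. The natural candidate is some kind of conjugacy-averaged or conjugacy-supremised version of $d$; a clean choice is
$$
D(g,f)=\sup_{h\in G} d\big(hgh^{-1},\,hfh^{-1}\big),
$$
provided this supremum is finite. This $D$ is automatically left-invariant (indeed bi-invariant, being defined by a sup over all conjugates), and it dominates $d$. So the two things to check are: (i) $D$ is a \emph{compatible} metric, i.e. it induces the topology of $G$, and (ii) $D$ is \emph{minimal}.

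For compatibility, the issue is continuity of $D$ at the identity — equivalently, that for every $\eps>0$ there is an identity neighbourhood $N$ with $D(g,1)\leqslant\eps$ for all $g\in N$. Here is where I would use both the SIN property and minimality of $d$ simultaneously. Fix $\eps>0$. By Condition \eqref{strongest cond} of Theorem \ref{main}, there is an open $U\ni 1$ with $d(g,1)\leqslant\frac1n$ whenever $g,g^2,\ldots,g^n\in U$. Now pick, using the SIN property, a \emph{conjugacy-invariant} identity neighbourhood $N$ so small that $N^n\subseteq U$ for a suitably large $n$ (more precisely, choose $n\geqslant 1/\eps$, then choose a symmetric $W\ni 1$ with $W^n\subseteq U$, then a conjugacy-invariant $N\subseteq W$). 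If $g\in N$, then for every $h$ the conjugate $hgh^{-1}$ also lies in $N$ (by conjugacy-invariance of $N$), hence $(hgh^{-1}),(hgh^{-1})^2,\ldots,(hgh^{-1})^n\in N^n\subseteq U$, so $d(hgh^{-1},1)\leqslant\frac1n\leqslant\eps$; taking the sup over $h$ gives $D(g,1)\leqslant\eps$. This also shows $D$ is finite near $1$, and since $D\geqslant d$ is a left-invariant metric, finiteness everywhere follows from connectedness of the argument through word-type decompositions — actually more simply: $D(g,1)$ may still be infinite for $g$ far from $1$, but that does not matter, since a left-invariant ``metric'' allowed to take the value $+\infty$ still metrises the topology as long as it is finite and continuous near $1$; alternatively replace $D$ by $\min(D,1)$, which changes nothing locally and is what minimality cares about. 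I would present it via the truncation $D\wedge 1$ to keep everything honest.

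For minimality of $D$, the quickest route is Condition \eqref{strongest cond} of Theorem \ref{main} again: I must find an open $V\ni 1$ with $g,g^2,\ldots,g^n\in V\Rightarrow D(g,1)\leqslant\frac1n$. But this is exactly the computation just performed, read the other way: take $V$ conjugacy-invariant with $V\subseteq U'$ where $U'$ witnesses \eqref{strongest cond} for $d$ with room to spare. If $g,g^2,\ldots,g^n\in V$, then since $V$ is conjugacy-invariant, $hgh^{-1},(hgh^{-1})^2,\ldots,(hgh^{-1})^n\in V\subseteq U'$ for every $h$, so $d(hgh^{-1},1)\leqslant\frac1n$, and hence $D(g,1)=\sup_h d(hgh^{-1},1)\leqslant\frac1n$. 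By Theorem \ref{main}, $D$ (or $D\wedge 1$) is minimal, and it is bi-invariant by construction.

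The main obstacle is the compatibility step, and within it the one genuinely delicate point is arranging a conjugacy-invariant identity neighbourhood $N$ that sits inside a prescribed small $d$-neighbourhood \emph{and} has controlled powers; this is precisely the interplay of the SIN property with Condition \eqref{strongest cond} of Theorem \ref{main}, and it is the heart of why the hypothesis ``SIN $+$ minimal'' is the right one. Everything else — left-invariance, the triangle inequality for a sup of metrics, the truncation trick to avoid worrying about $+\infty$, and bi-invariance — is routine. One should double-check that $D\wedge 1$ is still bi-invariant (it is, since $t\mapsto\min(t,1)$ is monotone and the bi-invariance of $D$ passes through) and still minimal (minimality is a purely local condition, unaffected by truncation at $1$).
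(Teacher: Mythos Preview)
Your proposal is correct and is essentially the paper's own proof. Note that by left-invariance of $d$ one has $d(hgh^{-1},hfh^{-1})=d(gh^{-1},fh^{-1})$, so your $D(g,f)=\sup_{h}d(gh,fh)$ is exactly the metric $\partial$ the paper constructs; the paper likewise truncates $d$ to $\min\{d,1\}$ beforehand and verifies minimality via condition \eqref{strongest cond} of Theorem \ref{main} using a conjugacy-invariant witness set $U$, just as you do (it simply replaces $U$ by a conjugacy-invariant subset at the outset rather than passing to a smaller $V\subseteq U$).
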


\begin{proof}
Suppose $d$ is a left-invariant minimal metric on $G$ as witnessed by an open set $U\ni1 $
so that
$$
g,g^2,\ldots, g^n\in U\;\saa\; d(g,1)\leqslant \frac 1n.
$$
Since $G$ is SIN, we may assume that $U$ is conjugacy invariant. Also, replacing $d$ with $\min\{d,1\}$, we can assume that $d\leqslant 1$. Define now a metric $\partial$ by
$$
\partial(g,h)=\sup_{f\in G}d(gf,hf),
$$
and note that, as $G$ is SIN, $\partial$ is a compatible bi-invariant metric on $G$. We claim that $\partial$ is minimal. Indeed, supposing that $g,g^2, \ldots, g^n\in U$, then, for every $f\in G$, we have $f\inv gf, (f\inv gf)^2, \ldots, (f\inv gf)^n\in U$ and thus $d(gf,f)=d(f\inv gf, 1)\leqslant 1/n$, i.e., $\partial(g,1)\leqslant 1/n$. 
\end{proof}

\begin{defi}
$G$ is a {\em locally SIN} group if there is an identity  neighbourhood $\ku O$ so that the sets
$$
V^\ku O=\{gfg\inv \del g\in \ku O\;\&\; f\in V\},
$$
where $V$ varies over identity neighbourhoods, form a neighbourhood basis at the identity.
\end{defi}
We claim that $G$ is locally SIN if and only if the inversion map $g\mapsto g\inv$ is left-uniformly continuous on an open symmetric set $W\ni1 $. Indeed, suppose first that  inversion is left-uniformly continuous on $W$. This means that, for  all open $V\ni 1$ there is an open $U\ni1 $ so that 
$$
g,f\in W\;\;\&\;\; g\inv f\in U\;\;\saa\;\; gf\inv \in V.
$$
We let $\ku O\ni1 $ be symmetric open so that $\ku O^2\subseteq W$. Then, for every open $V\ni 1$, pick $U\subseteq \ku O$ as above.  Then, if $g\in \ku O\subseteq W$ and $h\in U$, note that also $f=gh\in \ku OU\subseteq W$ and $g\inv f=h\in U$, whereby $ghg\inv =gf\inv\in V$, showing that $U^\ku O\subseteq V$, whence $G$ is locally SIN.
 
Conversely, suppose that $G$ is locally SIN as witnessed by some symmetric open $\ku O\ni 1$. Then, if $V\ni1$ is symmetric open, find some open $U$ with $1\in U\subseteq \ku O$ and $U^\ku O\subseteq V$ and note that 
$$
g,f\in \ku O\;\;\&\;\; g\inv f\in U\;\saa\; fg\inv =g\cdot g\inv f\cdot g\inv \in V\;\saa\; gf\inv =(fg\inv)\inv \in V.
$$
So inversion is left-uniformly continuous on $\ku O$.

Similarly, one may show that $G$ is locally SIN if and only if there is an open set $W\ni 1$ so that the map $(g,f)\in W\times W\mapsto gf\in W^2$ is left-uniformly continuous.

For the next Proposition, we recall that a topological group $G$ has {\em property (OB)} if, whenever $G\curvearrowright (X,d)$ is a continuous isometric action on a metric space $(X,d)$, every orbit is bounded. If $G$ is separable metrisable, property (OB) is equivalent to the following property (see \cite{OB}), which we may term the {\em strong property (OB)}: For every open $\ku O\ni1$ there are a finite set $F\subseteq G$ and a $k$ so that $G=(F\ku O)^k$. 

Apart from compact groups, a surprisingly large number of topological groups have property (OB). Of particular interest to us is the unitary group $U(\ku H)$ of separable infinite-dimensional Hilbert space, which has the strong property (OB) when equipped with the operator norm topology. This follows from the spectral theorem. More examples can be found, e.g., in \cite{OB}.

\begin{prop}\label{SIN+OB}
Let $G$  be a metrisable locally SIN group with the strong property (OB). Then $G$ is SIN. \end{prop}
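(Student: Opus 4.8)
The plan is to pass from the purely local control on conjugation that local SIN provides to global control, using that strong property (OB) makes $G$ boundedly generated by the local‑SIN neighbourhood.

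First I would record the standard reformulation: $G$ is SIN precisely when, for every open identity neighbourhood $W$, there is an identity neighbourhood $V$ with $gVg\inv\subseteq W$ for all $g\in G$. Indeed, granting such a $V$, the set $N=\bigcap_{g\in G}gWg\inv$ is conjugacy invariant, is contained in $W$ (take $g=1$), and contains $V$ (from $gVg\inv\subseteq W$ for all $g$ we get $g\inv Vg\subseteq W$, that is $V\subseteq gWg\inv$, for all $g$); hence $N$ is a conjugacy invariant identity neighbourhood inside $W$, and as $W$ ranges over a neighbourhood basis the sets $N$ form a neighbourhood basis of conjugacy invariant sets. So everything reduces to producing, for a fixed open $W\ni1$, such a $V$.

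Let $\ku O$ be the open identity neighbourhood witnessing local SIN. By the strong property (OB) there are a finite set $F\subseteq G$ and an integer $k$ with $G=(F\ku O)^k$, and hence $G=(F\cup\ku O)^m$ with $m=2k$; thus every $g\in G$ is a product $g=h_1h_2\cdots h_m$ with each $h_i\in F\cup\ku O$. The crucial point is that $m$ does not depend on $g$, so conjugation by an arbitrary element of $G$ decomposes into a \emph{bounded} number of conjugations, each either by an element of the fixed finite set $F$ or by an element of $\ku O$. Accordingly I would build a finite descending chain $W=W_0\supseteq W_1\supseteq\cdots\supseteq W_m$ of identity neighbourhoods with $hW_jh\inv\subseteq W_{j-1}$ for every $h\in F\cup\ku O$ and every $j$. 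For the $h\in F$ part one uses only that conjugation by a single element is a homeomorphism and that $F$ is finite, so one may take $\bigcap_{f\in F}f\inv W_{j-1}f$; for the $h\in\ku O$ part this is exactly the local SIN hypothesis, which supplies an identity neighbourhood $U$ with $U^{\ku O}\subseteq W_{j-1}$, hence $oUo\inv\subseteq W_{j-1}$ for all $o\in\ku O$, and one intersects with $U$ as well. Setting $V=W_m$ and writing $g=h_1\cdots h_m$, one peels the conjugations off from the inside: $h_mW_mh_m\inv\subseteq W_{m-1}$, then $h_{m-1}W_{m-1}h_{m-1}\inv\subseteq W_{m-2}$, and so on down to $h_1W_1h_1\inv\subseteq W_0=W$, so that $gVg\inv\subseteq W$. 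Since $g$ was arbitrary, this is the desired $V$, and the reduction above finishes the proof.

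The only genuine obstacle is conceptual rather than computational: local SIN controls a single layer of conjugation, by elements of $\ku O$, whereas conjugation by a general element of $G$ could a priori require unboundedly many such layers, so the induction above would never terminate. The strong property (OB) is precisely what removes this obstruction, bounding uniformly over $g\in G$ the number of $F$- and $\ku O$-factors needed to express $g$. The remaining ingredients — continuity of conjugation by a fixed element, and stability of neighbourhoods under finite intersection — are routine bookkeeping.
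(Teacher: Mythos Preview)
Your proposal is correct and follows essentially the same route as the paper: use strong property (OB) to write $G=(F\ku O)^k$, then build a finite descending chain of identity neighbourhoods, each stable under conjugation by $F$ (via continuity and finiteness) and by $\ku O$ (via local SIN), so the bottom of the chain is stable under conjugation by all of $G$. The only cosmetic difference is that you split $F\ku O$ into $F\cup\ku O$ and run a chain of length $2k$, whereas the paper handles each $F\ku O$-block in one step and uses a chain of length $k$; the underlying idea is identical.
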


\begin{proof}
Let $\ku O$ be an open neighbourhood of $1$ witnessing that $G$ is locally SIN and pick $k$ and a finite subset $F\subseteq G$ so that $G=(F\ku O)^k$. Suppose now that $U\ni1 $ is open. Set $V=\bigcap_{f\in F}f\inv Uf$, which is an open neighbourhood of $1$, and let $W\ni 1$ be an open set so that $W^\ku O\subseteq V$, whence $W^{F\ku O}=(W^\ku O)^F\subseteq V^F\subseteq U$. Thus, by induction, we can choose open $U=W_0\supseteq W_1\supseteq W_2\supseteq  \ldots\supseteq W_k\ni1 $ so that $W_{i}^{F\ku O}\subseteq W_{i-1}$,
whereby $W_k^G=W_k^{(F\ku O)^k}\subseteq W_0=U$. In particular, $W_k^G$ is a conjugacy invariant neighbourhood of $1$ contained in $U$, which shows that $G$ admits a neighbourhood basis at $1$ consisting of conjugacy invariant sets.
\end{proof}

Enflo \cite{enflo} showed that uniformly NSS groups are locally SIN (though he used the terminology {\em locally uniform} in place of {\em locally SIN}). To see this, let $U\ni 1$ be the open set given by the uniform NSS property and pick a symmetric open $\ku O\ni 1$ so that $\ku O^3\subseteq U$. Suppose now $W$ is an arbitrary neighbourhood of $1$ and find $n$ so that
$$
g, g^2, \ldots, g^n\in U\;\; \saa\;\;g\in W.
$$ 
We now choose some open $V\ni 1$ so that $V^n\subseteq \ku O$, whence also $\ku OV^n\ku O\inv \subseteq U$. In particular,
if $v\in V$ and $g\in \ku O$, then $(gvg\inv)^m=gv^mg\inv\in U$ for $m=1,\ldots, n$, 
whereby $gvg\inv \in W$. In other words, $V^\ku O\subseteq W$, verifying that $G$ is locally SIN.

So combining this result of Enflo with Corollary \ref{SIN+minimal} and Proposition \ref{SIN+OB}, we obtain the following.

\begin{cor}
Every group with a minimal metric and the strong property (OB) also has a bi-invariant minimal metric.
\end{cor}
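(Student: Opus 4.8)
The plan is to deduce the statement by chaining together the results established earlier in this section, so that no genuinely new argument is required. Let $G$ be a topological group carrying a minimal metric $d$ and having the strong property (OB). First I would note that, by Condition (\ref{strongest cond}) of Theorem \ref{main}, there is an open set $U\ni 1$ such that $g,g^2,\ldots,g^n\in U$ implies $d(g,1)\leqslant \tfrac1n$; in particular, for every open $V\ni 1$ one may choose $n$ large enough that $B_d(\tfrac1n)\subseteq V$, and then $g,g^2,\ldots,g^n\in U$ forces $g\in V$. Hence $G$ is uniformly NSS, and therefore metrisable (either by the remark of Enflo recalled above, or simply because $d$ is already a compatible metric).

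Next I would invoke Enflo's theorem, reproved immediately above, that every uniformly NSS group is locally SIN. Thus $G$ is a metrisable locally SIN group with the strong property (OB), so Proposition \ref{SIN+OB} applies and shows that $G$ is in fact SIN.

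Finally, $G$ is a SIN group admitting a minimal metric, so Corollary \ref{SIN+minimal} furnishes a bi-invariant minimal metric on $G$, which is precisely the assertion.

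There is no real obstacle here: each step is a direct application of a result stated above, and the combination was in fact already announced in the sentence preceding the corollary. The only point deserving a moment's attention is verifying that the hypotheses of Proposition \ref{SIN+OB} hold for $G$ — namely metrisability and the locally SIN property — but both of these are consequences of $G$ being uniformly NSS, which is itself immediate from Theorem \ref{main}.
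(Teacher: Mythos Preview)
Your proposal is correct and follows exactly the route the paper takes: the corollary is stated immediately after the sentence ``So combining this result of Enflo with Corollary \ref{SIN+minimal} and Proposition \ref{SIN+OB}, we obtain the following,'' and your chain (minimal $\Rightarrow$ uniformly NSS $\Rightarrow$ locally SIN $\Rightarrow$ SIN via Proposition \ref{SIN+OB} $\Rightarrow$ bi-invariant minimal via Corollary \ref{SIN+minimal}) is precisely that combination.
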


Recall that a sequence $(f_i)$ in a metrisable group $G$ is  {\em left-Cauchy} if $f_i\inv f_j\Lim{i,j\til \infty}1$ and {\em right-Cauchy} if $f_i f_j\inv \Lim{i,j\til \infty}1$. The group $G$ is {\em Ra\u\i kov complete} if every sequence that is both left and right-Cauchy is convergent. This is equivalent to $G$ being {\em completely metrisable}, i.e., that the topology on $G$ can be induced by a complete metric. 
Also, $G$ is {\em Weil complete} if every left-Cauchy sequence in $G$ is convergent. This, in turn, is equivalent to the existence of a compatible complete left-invariant metric on $G$ (such groups are sometime denoted {\em CLI} for {\em complete left-invariant}). Moreover, in this case, every compatible left-invariant metric is complete.

\begin{lemme}\label{weil}
If $G$ is locally SIN and completely metrisable, then $G$ is Weil complete. In particular, every minimal metric on a completely metrisable group is necessarily complete.
\end{lemme}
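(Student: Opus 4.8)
The plan is to prove directly that $G$ is Weil complete, i.e.\ that every left-Cauchy sequence converges, and then to deduce both assertions: in a Weil complete group every compatible left-invariant metric is complete, while a group carrying a minimal metric is uniformly NSS by condition (\ref{strongest cond}) of Theorem \ref{main}, hence locally SIN by Enflo's observation recorded above, so that the lemma applies to it once it is also completely metrisable. The only structural input I would use is the characterisation of local SIN-ness established just before the lemma: there is a symmetric open $W\ni1$ on which inversion is left-uniformly continuous, i.e.\ for every open $V\ni1$ there is an open $U\ni1$ with $g,f\in W\;\&\;g\inv f\in U\;\saa\;gf\inv\in V$.

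So let $(f_n)$ be left-Cauchy, meaning $f_n\inv f_m\to1$. First I would fix $N$ large enough that $f_m\inv f_n\in W$ whenever $m,n\geqslant N$, and replace $(f_n)$ by the left-translated tail $g_n=f_N\inv f_n$ for $n\geqslant N$; since left translation by the fixed element $f_N$ is a homeomorphism, it suffices to show that $(g_n)$ converges. Taking $m=N$ gives $g_n\in W$ for all $n\geqslant N$, and $g_n\inv g_m=f_n\inv f_m$, so $(g_n)$ remains left-Cauchy. The main step is to check that $(g_n)$ is \emph{also} right-Cauchy: given an open $V\ni1$, pick $U\ni1$ as in the displayed implication for this $V$, then pick $N_V\geqslant N$ with $f_n\inv f_m\in U$ for all $n,m\geqslant N_V$; for such $n,m$ we have $g_n,g_m\in W$ and $g_n\inv g_m\in U$, hence $g_ng_m\inv\in V$. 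Thus $(g_n)$ is both left- and right-Cauchy, so it converges by complete metrisability of $G$; hence $(f_n)=(f_Ng_n)$ converges as well, and $G$ is Weil complete.

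Consequently every compatible left-invariant metric on $G$ is complete. For the final sentence of the statement, let $G$ be completely metrisable with a minimal metric $d$; then $G$ is uniformly NSS (immediate from condition (\ref{strongest cond}) of Theorem \ref{main}), hence locally SIN, hence Weil complete by what was just shown, and therefore the compatible left-invariant metric $d$ is complete.

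The one point needing care --- and the reason one cannot simply assert that a left-Cauchy sequence is automatically right-Cauchy --- is that local SIN-ness only yields left-uniform continuity of inversion \emph{on the identity neighbourhood} $W$, whereas the terms $f_n$ themselves need not enter $W$. This is precisely what the preliminary left translation by a single, fixed large-index element $f_N$ fixes: it drags the whole tail of the sequence into $W$, the two-sided Cauchy estimate is then carried out entirely inside $W$ (with the auxiliary $U$ chosen only after $V$ has been prescribed, so that no circular dependence of parameters occurs), and translating back by the fixed $f_N$ preserves convergence. The remaining verifications --- that left translations are homeomorphisms, that a sequence which is both left- and right-Cauchy converges in a completely metrisable group, and the routine manipulations with neighbourhoods --- require nothing further.
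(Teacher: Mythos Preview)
Your argument is correct and follows essentially the same route as the paper: show that a left-Cauchy sequence is also right-Cauchy using the locally SIN hypothesis, then invoke Ra\u\i kov completeness. The only cosmetic difference is that you use the equivalent characterisation of local SIN via left-uniform continuity of inversion on $W$ and pre-translate the tail into $W$ by $f_N\inv$, whereas the paper works directly with the $V^{\ku O}$ definition and carries the fixed element $f_{i_0}$ through a conjugation computation; both arrive at the same right-Cauchy conclusion by the same mechanism.
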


\begin{proof}
Indeed, suppose that $\ku O\ni 1$ is an open set witnessing that $G$ is locally SIN and that $(f_i)$ is left-Cauchy. To see that $(f_i)$ is right-Cauchy, fix a neighbourhood $U$ of $1$ and pick some $i_0$ so that $f_i\inv f_j\in \ku O$ for all $i,j\geqslant i_0$. Now choose some open $W\ni1 $ so that $f_{i_0}Wf_{i_0}\inv \subseteq U$ and some open $V\ni 1$ so that $V^\ku O\subseteq W$. Finally, let $i_1\geqslant i_0$ be so that $f_i\inv f_j\in V$ whenever $i,j\geqslant i_1$. Then $i,j\geqslant i_1$ implies that $f_{i_0}\inv f_i\in\ku O$ and so $f_i=f_{i_0}g$ for some $g\in \ku O$, whence
$$
f_jf_i\inv= f_i\cdot f_i\inv f_j\cdot f_i\inv =  f_{i_0}g \cdot f_i\inv f_j\cdot  g\inv  f_{i_0}\inv \in  f_{i_0}g Vg\inv  f_{i_0}\inv \subseteq f_{i_0}W  f_{i_0}\inv\subseteq U.
$$
Thus $(f_i)$ is also right-Cauchy and therefore convergent in $G$.
\end{proof}

The next result has a long history and many variations. The first occurrence seems to be the paper by A. Gleason \cite{gleason2} in which it is proved that, in a locally euclidean NSS group, there is an identity neighbourhood in which square roots, whenever they exist, are necessarily unique. We shall need a stronger version of this, namely that in a uniformly NSS group the extraction of square roots, whenever they exist, is left-uniformly continuous. A result of this form, under additional hypotheses, is also proved in Enflo's paper \cite{enflo}.
 
\begin{lemme}\label{square roots}
Suppose $G$ is uniformly NSS. Then there is an open set $V\ni1$ so that, for every open $U\ni1$, there is an open $W\ni 1$ so that
$$
g,f\in V\;\;\;\&\;\;\;g^{-2}f^2\in W\;\saa\; g\inv f\in U.
$$
In particular, the map $g\mapsto g^2$ is injective on $V$.
 \end{lemme}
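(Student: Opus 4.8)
The plan is to derive the statement from the uniform NSS property of $G$ together with the fact, due to Enflo and recalled above, that a uniformly NSS group is locally SIN. Local SINness supplies an open symmetric $\ku O\ni 1$ on which inversion and multiplication are left-uniformly continuous; from this one gets that conjugation $x\mapsto a^{-1}xa$ by elements $a\in\ku O$ carries small sets into small sets uniformly in $a$, and in particular that $a^{-1}wa\to 1$ as $w\to 1$ uniformly over $a$ in a fixed neighbourhood contained in $\ku O$. Intersecting with a uniform NSS witness and shrinking, we may also assume $\ku O$ itself witnesses uniform NSS, since any subset of such a witness is again one. I would then take $V$ symmetric open with $V^{3}\subseteq\ku O$, so that for $g,f\in V$ the element $h=g^{-1}f$, as well as all the auxiliary elements below, lie in the tame region $\ku O$. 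With this choice the last assertion is immediate: if $g,f\in V$ and $g^{2}=f^{2}$, then $g^{-2}f^{2}=1$ lies in every neighbourhood $W$, so $g^{-1}f$ lies in every neighbourhood $U$, whence $g=f$.

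For the main claim, fix $U\ni 1$ and let $n=n(U)$ be such that $x,x^{2},\dots ,x^{n}\in\ku O$ implies $x\in U$ (possible as $\ku O$ witnesses uniform NSS). It then suffices to find $W$ so that, for $g,f\in V$ with $g^{-2}f^{2}\in W$, all powers $h,h^{2},\dots ,h^{n}$ of $h=g^{-1}f$ lie in $\ku O$. The computations rest on the elementary identities
$$
g^{-2}f^{2}=(g^{-1}f)^{g}\cdot(g^{-1}f)=h\cdot h^{f},\qquad h^{f}=h^{-1}\,(g^{-2}f^{2}),
$$
their iterates $g^{-m}f^{m}=h^{g^{m-1}}\cdots h^{g}h$ and $g^{-2\ell}f^{2\ell}=\bigl(g^{-2(\ell-1)}f^{2(\ell-1)}\bigr)^{g^{2}}(g^{-2}f^{2})$, and on the transport identity: for $k\ge 1$ the pair $\bigl(g,\,fh^{k-1}\bigr)$ satisfies
$$
g^{-1}\bigl(fh^{k-1}\bigr)=h^{k},\qquad g^{-2}\bigl(fh^{k-1}\bigr)^{2}=(g^{-2}f^{2})\cdot\bigl(h^{-1}(g^{-2}f^{2})\bigr)^{k-1}\cdot h^{k-1}.
$$

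I would then prove by induction on $N\le n$ that there is a neighbourhood $W_{N}\ni 1$ such that $g,f\in V$ and $g^{-2}f^{2}\in W_{N}$ force $h^{i}\in\ku O$ for every $i\le N$. The base case $N=1$ is free, since $h=g^{-1}f\in V^{2}\subseteq\ku O$. For the step from $N$ to $N+1$: granted $h,\dots ,h^{N}\in\ku O$, apply the transport identity with $k=N+1$; the tameness of $h,\dots ,h^{N}$ lets one expand $\bigl(h^{-1}w\bigr)^{N}$ (with $w=g^{-2}f^{2}$) and collect the defect of the transported pair, whose left-difference of squares realises $h^{N+1}$, into $w$ multiplied by a number of conjugates of $w$ by tame powers of $h$ bounded in terms of $N$; by the uniform smallness of such conjugates this defect is driven into any prescribed neighbourhood by shrinking $W$. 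Feeding this back to conclude $h^{N+1}\in\ku O$ is the delicate point.

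That last step is the main obstacle, and it is where the uniform NSS hypothesis is genuinely used rather than merely cited. The difficulty is structural: one cannot conjugate $w$ directly by the large powers of $g$ (or of $h$) that appear, because a uniformly NSS group need not be SIN, so conjugation by high powers is not uniformly tame. The resolution I expect is to interleave the induction with the bound $n=n(U)$ and with the hypothesis $h\notin U$: the latter furnishes a first power $h^{j}$ ($j\le n$) leaving $\ku O$, so that $h,\dots ,h^{j-1}$ are available as tame conjugators, and only finitely many conjugations — controlled by $j$, hence by $n$ — are ever needed; calibrating how small $W_{N}$ must be as a function of $N$, tracking the accumulation of commutator corrections over the $\lesssim N$ steps, is then routine but must be done carefully. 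The conceptual point is that uniform NSS plays a double role: it bounds the number of powers of $h$ that have to be kept inside $\ku O$, and, via local SINness, it supplies inside $\ku O$ exactly the tame conjugators needed to keep them there.
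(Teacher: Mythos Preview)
Your outline correctly isolates the ingredients (uniform NSS, local SIN, the target of forcing $h,h^{2},\ldots,h^{n}\in\ku O$ where $h=g^{-1}f$), but the inductive step you describe has a genuine gap --- one you yourself flag as ``the delicate point'' and ``the main obstacle.'' The transport identity $g^{-2}(fh^{k-1})^{2}=w(h^{-1}w)^{k-1}h^{k-1}$ does not by itself yield $h^{N+1}\in\ku O$: knowing that the transported pair $(g,fh^{N})$ has small square-defect tells you nothing unless $fh^{N}$ is back in $V$, which you cannot arrange. More fundamentally, your scheme tries to control the powers $h^{i}$ directly, and this is exactly the circularity you sense: to push $h^{N+1}$ into $\ku O$ via conjugation estimates, you need tame conjugators, but those conjugators are the very powers of $h$ whose tameness is in question. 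The ``expected resolution'' you sketch (arguing by contradiction with a first escaping power $h^{j}$) does not break the loop, since at the critical step $h^{j-1}$ is tame but $h^{j}$ is not, and nothing you have written produces $h^{j}\in\ku O$ from $h^{j-1}\in\ku O$ and $w$ small.

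The paper's proof sidesteps this by decoupling the two controls. With $y=g^{-1}f$ and $x=g^{-1}y^{-1}g$ one has $x^{-1}y=g^{-2}f^{2}=w$, and the telescoping identity
$$
x^{-i}y^{i}=[x^{-1}y]\cdot[y^{-1}(x^{-1}y)y]\cdots[y^{-(i-1)}(x^{-1}y)y^{i-1}]
$$
expresses $x^{-i}y^{i}=g^{-1}y^{i}gy^{i}$ as a product of $i$ conjugates of $w$ by $1,y,\ldots,y^{i-1}$. Since $y\in V^{2}\subseteq\ku O$, these conjugators lie in $\ku O^{j}$ for $j<n$, and iterated local SIN (for the finitely many $j<n$) lets one choose $W$ so that the whole product lands in $V$ --- \emph{without} assuming anything about $y^{2},\ldots,y^{i-1}$ individually. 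With $g^{-1}y^{i}gy^{i}\in V$ secured unconditionally for all $i\leqslant n$, the doubling identity
$$
y^{2i}=(y^{i}g^{-1}y^{-i})\cdot g\cdot(g^{-1}y^{i}gy^{i})
$$
shows that $y^{i}\in\ku O$ forces $y^{2i}\in V^{\ku O}VV$; a short even/odd induction then gives $y,y^{2},\ldots,y^{n}\in\ku O$. The idea you are missing is to first get unconditional control on the auxiliary quantity $g^{-1}y^{i}gy^{i}$ and only then bootstrap to $y^{i}$.
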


\begin{proof}Since uniformly NSS groups are also locally SIN, we fix a symmetric open set $\ku O\ni1 $ witnessing both that $G$ is uniformly NSS and locally SIN. Let also $V\ni 1$ be symmetric open so that $\big(V^\ku OVV\big)^2\subseteq \ku O$. 

To see that the lemma holds for $V$, suppose $U$ is given and pick some $n$ so that $\big(y, y^2, \ldots, y^n\in \ku O\;\saa\; y\in U\big)$. Let also  $W\ni 1$ be an open set with $WW^\ku O W^{\ku O^2}\cdots W^{\ku O^{n-1}}\subseteq V$. Then, for $x,y\in \ku O$, 
$$
x^{-i}y^{i}=\big[x\inv y\big]\cdot \big[y\inv(x\inv y)y\big]\cdot \big[y^{-2}(x\inv y)y^2\big]\cdots 
\big[y^{-(i-1)}(x\inv y)y^{i-1}\big]
$$
and so, if also $x\inv y\in W$ and $i\leqslant n$, then $x^{-i}y^i\in WW^\ku O W^{\ku O^2}\cdots W^{\ku O^{i-1}}\subseteq V$.
In other words, 
$$
x,y\in \ku O\;\;\;\&\;\;\; x\inv y\in W\;\;\saa\;\; \a i\leqslant n\colon\; x^{-i}y^i\in V.
$$

Suppose that $g,f\in V$ satisfy $g^{-2}f^2\in W$ and set $y=g\inv f$ and $x=g\inv y\inv g$. Then $x\inv y=g^{-2}f^2\in W$ and thus $g\inv y^igy^i =x^{-i}y^i\in V$ for $i\leqslant n$. Note now that
$$
y^i\in \ku O\;\;\saa\;\; y^{2i}=(y^ig\inv y^{-i})\cdot g\cdot (g\inv y^igy^i)\in V^\ku OVV
$$
for all $i\leqslant n$. We claim that $y^i\in V^\ku OVV$ for even $i\leqslant n$ and $y^i\in (V^\ku OVV)^2\subseteq \ku O$ for odd $i\leqslant n$. This is clear for $i=0,1$, so suppose the result holds for all $i\leqslant j<n$ and consider $i=j+1$. If $i$ is odd, then $j$ is even and thus $y^j\in V^\ku OVV$ and $y^i=yy^j\in V^2\cdot  V^\ku OVV\subseteq (V^\ku OVV)^2$. On the other hand, if $i$ is even, then $l=\frac i2\leqslant j$ and so $y^l\in (V^\ku OVV)^2\subseteq \ku O$, whence $y^i=y^{2l}\in V^\ku OVV$. 

Thus, $y,y^2,\ldots, y^n\in \ku O$, whereby $g\inv f=y\in U$ as claimed.
\end{proof}

Our final result originates in work of C. Chevalley \cite{chevalley}, who showed how to construct one-parameter subgroups in locally euclidean NSS groups. Again, a generalisation to the non-locally compact setting was obtained by Enflo in \cite{enflo}, in which he proved the result below under the assumption that the group is {\em uniformly dissipative}. However, this assumption excludes, for example, compact Lie groups and therefore does not generalise the classical setting of locally compact Lie groups. Our result below includes this latter setting.

\begin{thm}\label{one-parameter}
Suppose $G$ is a completely metrisable topological group admitting a minimal metric and that, for every open $W\ni1$, the set $\{g^2\del g\in W\}$ is dense in a neighbourhood of $1$. Then there are open sets $\ku U\supseteq \ku O\ni1$ so that, for every $f\in \ku O$, there is a unique one-parameter subgroup $(h^\alpha)_{\alpha\in \R}$ with $h^1=f$ and $h^\alpha\in \ku U$ for all $\alpha\in [-1,1]$.
\end{thm}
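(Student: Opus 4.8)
The plan is to run the classical Gleason--Chevalley construction of one-parameter subgroups, with three substitutions dictated by the hypotheses: conditions (2)--(4) of Theorem~\ref{main} play the role of the Gleason estimates; the density of squares together with Weil completeness replaces the compactness argument normally used to extract square roots; and Lemma~\ref{square roots} keeps those roots unique and uniformly controlled. Throughout write $B^\circ_d(\alpha)=\{g\in G\mid d(g,1)<\alpha\}$ for the open $d$-ball about $1$. Note first that, since $d$ is minimal, $G$ is uniformly NSS, hence locally SIN, and, being completely metrisable, it is Weil complete by Lemma~\ref{weil}; in particular $d$ is complete. Now fix the open set $V\ni1$ from Lemma~\ref{square roots}, shrunk so that $V\subseteq B^\circ_d(\eps/2)$ where $\eps,K_3$ are the constants of condition (3), and pick $V_1\ni1$ with $\ov{V_1}\subseteq V$. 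By hypothesis $\{g^2\mid g\in V_1\}$ is dense in some open $N\ni1$, and I claim every $h\in N$ has a square root in $V$. Indeed, choosing $g_k\in V_1$ with $g_k^2\til h$, one has $g_k^{-2}g_l^2\til1$ as $k,l\til\infty$, so the uniform injectivity clause of Lemma~\ref{square roots} forces $(g_k)$ to be left-Cauchy, hence convergent by Weil completeness to some $g\in\ov{V_1}\subseteq V$ with $g^2=h$; uniqueness is the injectivity of squaring on $V$. Writing $\sqrt h$ for this root, condition (3) with exponent $2$ (applicable since $\sqrt h\in B^\circ_d(\eps/2)$) gives $d(\sqrt h,1)\leqslant\frac{K_3}{2}\,d(h,1)$.

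Next let $U_4\ni1$ and $K_4$ be the open set and constant of condition (4), shrunk so that $U_4\subseteq N$, and fix $s>0$ with $B^\circ_d(s)\subseteq U_4$. Choose $\rho>0$ small enough that $\rho<s$, $\frac{K_3K_4}{2}\rho<s$, and $B^\circ_d(2K_4\rho)\subseteq V$; put $\ku O=B^\circ_d(\rho)$. For $f\in\ku O$ define $f^{1/2^n}$ recursively by $f^{1/2^0}=f$ and $f^{1/2^{n+1}}=\sqrt{f^{1/2^n}}$, and prove by induction on $n$ that $f^{1/2^j}$ is defined and lies in $U_4$ for every $j\leqslant n$. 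At the inductive step, the powers $g^{2^i}=f^{1/2^{n-i}}$ ($0\leqslant i\leqslant n$) of $g=f^{1/2^n}$ all lie in $U_4$, so condition (4) yields $2^n\,d(f^{1/2^n},1)\leqslant K_4\,d(f,1)\leqslant K_4\rho$; one application of the square-root estimate $d(\sqrt h,1)\leqslant\frac{K_3}{2}d(h,1)$ then gives $d(f^{1/2^{n+1}},1)\leqslant\frac{K_3K_4}{2^{n+1}}\rho<s$, so $f^{1/2^{n+1}}\in B^\circ_d(s)\subseteq U_4$ and the induction closes. Invoking condition (4) once more for each fixed $n$ now gives the sharp bound
$$
d(f^{1/2^n},1)\leqslant\frac{K_4}{2^n}\,d(f,1)\qquad (n\geqslant0).
$$
The essential feature is that the crude square-root factor $K_3/2$ is never iterated --- condition (4) re-extracts a genuinely linear estimate at every scale.

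Since $(f^{1/2^{n+1}})^2=f^{1/2^n}$, we have $(f^{1/2^n})^{2^n}=f$, so $k/2^n\mapsto(f^{1/2^n})^k$ is a well-defined group homomorphism $\alpha\mapsto f^\alpha$ from the dyadic rationals into $G$. Using $d(g^k,1)\leqslant|k|\,d(g,1)$ together with the sharp bound, $d(f^\alpha,f^\beta)=d(f^{\alpha-\beta},1)\leqslant K_4\,d(f,1)\,|\alpha-\beta|$ for all dyadic $\alpha,\beta$; hence $\alpha\mapsto f^\alpha$ is uniformly Lipschitz and, $d$ being complete, extends uniquely to a Lipschitz map $\R\til G$, which is automatically a continuous one-parameter subgroup with value $f$ at $1$. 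Put $\ku U=B^\circ_d(R)$ for some $R$ with $K_4\rho<R<2K_4\rho$, so that $\ku U\subseteq V$; then $\ku O\subseteq\ku U$, and $d(f^\alpha,1)\leqslant K_4\,d(f,1)<K_4\rho<R$ shows $f^\alpha\in\ku U$ for $\alpha\in[-1,1]$. For uniqueness, suppose $\gamma,\gamma'\colon\R\til G$ are one-parameter subgroups with $\gamma(1)=\gamma'(1)$ and with $\gamma([-1,1]),\gamma'([-1,1])\subseteq\ku U$. An induction on $n$ gives $\gamma(1/2^n)=\gamma'(1/2^n)$: at the inductive step both $\gamma(1/2^{n+1})$ and $\gamma'(1/2^{n+1})$ are square roots, lying in $\ku U\subseteq V$, of the common element $\gamma(1/2^n)=\gamma'(1/2^n)$, hence equal by the injectivity in Lemma~\ref{square roots}. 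Thus $\gamma$ and $\gamma'$ agree on all dyadic rationals, hence, by continuity, everywhere.

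The main obstacle is the second step: keeping every dyadic root $f^{1/2^n}$ inside a single fixed neighbourhood while retaining the linear decay $d(f^{1/2^n},1)=O(2^{-n})$, rather than the useless geometric bound $O((K_3/2)^n)$ that a naive iteration of square roots produces --- re-invoking condition (4) at every scale is exactly what breaks this apparent circularity. A point subtler than it first looks, though ultimately routine, is the first step, where one must verify that the approximate square roots supplied by the density hypothesis really do form a left-Cauchy sequence (this is the content of the uniform-injectivity clause of Lemma~\ref{square roots}) and that Weil completeness is available to take the limit.
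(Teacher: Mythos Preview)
Your argument is correct and follows the same Gleason--Chevalley outline as the paper: extract square roots by combining the density hypothesis with the left-Cauchy criterion from Lemma~\ref{square roots} and Weil completeness; use the minimality condition~(4) to force linear decay $d(f^{1/2^n},1)=O(2^{-n})$; pass to a homomorphism on dyadic rationals, extend by completeness, and read off uniqueness from the injectivity of squaring on a fixed neighbourhood.

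The one organisational difference worth noting is how the linear decay is secured. The paper first builds a fixed chain $V_0\supseteq V_1\supseteq\cdots\supseteq V_k$ of neighbourhoods in which $k$ successive square roots can be taken, applies condition~(4) once to the block of $k$ roots to obtain a clean contraction factor $\tfrac12$ per block, and then iterates blocks. You instead take one square root at a time and re-invoke condition~(4) at every stage to overwrite the crude factor $K_3/2$ with a genuine $2^{-n}$ bound. Your route avoids the chain construction and yields a global Lipschitz constant $K_4\,d(f,1)$ for the map $\alpha\mapsto f^\alpha$ directly; the paper's route gives a slightly cleaner invariant (all iterates stay in the single ball $V_k=B_d(\eps)$) at the cost of a little more setup. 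Either packaging works, and the underlying mechanism---condition~(4) preventing the na\"ive square-root estimate from compounding---is the same.
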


\begin{proof}
Let $d$ be a minimal metric and observe that $d$ is complete by Lemma \ref{weil}.
We claim that, for all sufficiently small identity neighbourhoods $V$, there is an identity neighbourhood $W$ so that every element of $W$ has a square root in $V$. 

Note first that it suffices to show this for closed $V$. Since $G$ has a minimal metric, it is uniformly NSS and hence, by Lemma \ref{square roots} it follows that, if $V$ is a sufficiently small neighbourhood of $1$, then, for every open $U\ni 1$, there is an open $\tilde U\ni1$ satisfying
\begin{equation}\label{uniformity}
g,f\in V\;\;\&\;\; g^{-2}f^2\in\tilde U\;\saa\; g\inv f\in U.
\end{equation}

Given such a closed neighbourhood $V$, set $W=\ov {\{g^2\del g\in V\}}$, which by assumption is a neighbourhood of $1$, and assume that $f\in W$. We pick $g_n\in V$ so that $g_n^2\Lim{n} f$, which means that $(g_n^2)$ is left-Cauchy and thus, by the above assumption (\ref{uniformity}) on $V$,  also $(g_n)$ is left-Cauchy. Since $d$ is a complete left-invariant metric and $V$ is closed, it follows that $(g_n)$ is convergent to some $g\in V$, whence $g^2=\lim_{n}g_n^2=f$. In other words, every element of $W$ has a square root in $V$, which proves our claim.

Since $d$ is minimal, there are an identity  neighbourhood $V_0$ and a $k\geqslant 1$ so that, for all $g\in G$ and $n\geqslant 1$, 
$$
g, g^2, g^4, g^8, \ldots, g^{2^n}\in V_0\;\saa\; d(g,1)\leqslant 2^{k-n-1}\cdot d(g^{2^n},1).
$$
Define inductively identity neighbourhoods $V_0\supseteq V_1\supseteq \ldots\supseteq V_k$  so that every element of $V_{i+1}$ is the square of some element in $V_i$. By shrinking $V_k$, we may assume that $V_k=B_d(\eps)$ for some $\eps>0$.

Now suppose that $h_0\in V_k$ and choose inductively $h_i\in V_{k-i}$ so that $h_{i+1}^2=h_i$. Then $h_k, h_k^2=h_{k-1}, h_k^4=h_{k-2}, \ldots, h_k^{2^k}=h_0$ all belong to $V_0$, whence 
$$
d(h_k,1)\leqslant 2^{k-k-1}\cdot d(h_k^{2^k}, 1)=1/2 \cdot d(h_0, 1)\leqslant \eps/2.
$$
This shows that every $h\in V_k$ has a $2^k$-th root $f \in V_k$ so that $d(f, 1)\leqslant 1/2 \cdot d(h,1)$. 

Therefore, if $f\in V_k$ is given, we can choose an infinite sequence $h_0,h_1, h_2,\ldots\in V_k$  beginning at $h_0=f$ so that $h_{i+1}^{2^k}=h_i$ and $d(h_{i+1},1)\leqslant 1/2\cdot d(h_i,1)$ for all $i$. In particular, $h_i^{2^{k\cdot j}}=h_{i-j}$ for all $j\leqslant i$.  For every dyadic rational number $\alpha=\frac m{2^{k\cdot i}}$ with $m\in \Z$ and $i\in \N$, we can then unambigously define $h^\alpha=h_i^m$ and see that $h^\alpha\cdot h^\beta=h^{\alpha+\beta}$ for all dyadic rationals $\alpha$ and $\beta$. 

Now, if $\alpha\in [0,1[$ is a dyadic rational, write
$$
\alpha=\frac {a_1}{2^k}+\frac{a_2}{2^{k\cdot 2}}+\ldots+\frac{a_p}{2^{k\cdot p}}
$$
with  $0\leqslant a_i<2^k$. Then
$$
h^\alpha= h_1^{a_1}\cdot h_2^{a_2}\cdots h_p^{a_p},
$$
whereby
\[\begin{split}
d(h^\alpha,1)
&\leqslant a_1\cdot d(h_1,1)+a_2\cdot d(h_2,1)+\ldots+a_{p}\cdot d(h_{p},1)\\
&\leqslant \Big(\frac{a_1}2+\frac{a_2}{2^2}+\ldots+\frac{a_{p}}{2^{p}}\Big)\cdot \eps\\
&<2^k\cdot \eps.
\end{split}\]
Moreover, if $\alpha<\frac1{2^{ki}}$, then $a_1=\ldots=a_i=0$, whence $d(h^\alpha,1)<2^{k-i}\cdot \eps$.

It follows that the mapping $\alpha\mapsto h^\alpha$ is a continuous homomorphism from the additive group of dyadic rationals with the topology induced by $\R$ into $G$. Since $d$ is a complete metric on $G$, it follows that this extends to a continuous one-parameter subgroup $(h^\alpha)_{\alpha\in \R}$ with $h^1=f$ and so that $d(h^\alpha,1)\leqslant 2^k\cdot \eps$ for all $\alpha\in [-1,1]$.

Now, suppose that $(h^\alpha)_{\alpha\in \R}$ and $(g^\alpha)_{\alpha\in \R}$ are distinct one-parameter subgroups in $G$ with $h^1=g^1$. Then, by the density of the dyadic rationals in $\R$, there must be some dyadic rational $\alpha=\frac 1{2^{n}}$ so that $h^\alpha\neq g^\alpha$. However, as $h^1=g^1$, it follows that there is an $\ell\geqslant 0$ so that $h^{\frac1{2^{\ell+1}}}\neq g^{\frac1{2^{\ell+1}}}$, while $h^{\frac1{2^{\ell}}}=g^{\frac1{2^{\ell}}}$. So the squaring map $f\mapsto f^2$ fails to be injective on any set containing $\{h^\beta\}_{\beta\in [-1,1]}$ and $\{g^\beta\}_{\beta\in [-1,1]}$.
 
Therefore, if we choose $V_k=B_d(\eps)$ small enough so that the squaring map is injective on $B_d(2^k\cdot \eps)$, which is possible by Lemma \ref{square roots}, then, for every $f\in V_k$, there is a unique one-parameter subgroup $(h^\alpha)_{\alpha\in \R}\subseteq G$ so that $h^1=f$ and $h^\alpha\in B_d(2^k\cdot \eps)$ for all $\alpha\in [-1,1]$. Setting $\ku U= B_d(2^k\cdot \eps)$ and $\ku O=B_d(\eps)$, the theorem follows.
\end{proof}



\end{document}